\documentclass[11pt]{article}

\usepackage{cite}
\usepackage{subfigure}
\usepackage{graphicx, color, graphpap}
\usepackage[normalem]{ulem}
\usepackage{soul}

\usepackage{amsmath,amssymb,amsthm}
\usepackage{ifthen}
\usepackage{mathrsfs}
\usepackage{mathtools}
\usepackage{enumitem}
\usepackage{lineno}
\usepackage{float}
\usepackage{fancyhdr}
\usepackage[us,12hr]{datetime} 
\usepackage{exscale}
\usepackage{tabularx}
\usepackage{latexsym}
\usepackage{fullpage}       
\usepackage{float}
\usepackage{verbatim}
\usepackage{multirow}
\usepackage{overpic}
\usepackage{comment} 
\usepackage{hyperref}
\usepackage{blkarray}
\usepackage{adjustbox}
\usepackage{tcolorbox}

\usepackage{pgfplots}
\usepackage{marvosym}
\usepackage[noabbrev]{cleveref}
\usepackage{tikz}
\usepackage{pgfplots}

\usepackage{algorithm}
\usepackage{algpseudocode}

\pgfplotsset{compat=1.11}
\usetikzlibrary{arrows, arrows.meta}
\usetikzlibrary{shapes.geometric, arrows.meta, positioning}

\tikzstyle{startstop} = [rectangle, rounded corners, minimum width=3.5cm, minimum height=1cm,text centered, draw=black, fill=blue!10]
\tikzstyle{process} = [rectangle, minimum width=4cm, minimum height=1cm, text centered, draw=black, fill=green!10]
\tikzstyle{io} = [trapezium, trapezium left angle=70, trapezium right angle=110, minimum width=3.5cm, minimum height=1cm, text centered, draw=black, fill=orange!15]
\tikzstyle{arrow} = [thick,->,>=stealth]

\definecolor{methD}{RGB}{230,120,50}
\definecolor{methDD}{RGB}{50,140,90}
\definecolor{methSDD}{RGB}{50,90,170}
\definecolor{methPSD}{RGB}{180,40,40}
\tikzset{
	visbox/.style={rounded corners=8pt, line width=1.2pt, align=center, inner sep=8pt},
	visnestouter/.style={visbox, minimum width=9.6cm, minimum height=4.4cm},
	visnest/.style={visbox, minimum width=7.6cm, minimum height=3.25cm},
	visnestmid/.style={visbox, minimum width=5.7cm, minimum height=2.2cm},
	visnestinn/.style={visbox, minimum width=3.4cm, minimum height=1.05cm,
		inner sep=3pt, font=\scriptsize\bfseries, align=center, text=methD},
	visnestpos/.style={anchor=south east},
	visnestinset/.style={xshift=-5pt, yshift=5pt},
}

\newcommand{\FW}{\mathbb{FW}}

\newcommand{\Snop}{\mathbb{S}_{+}^{n+1}}

\DeclareMathOperator{\ri}{ri}
\DeclareMathOperator{\range}{range}

\newcommand{\D}{\mathcal{D}}

\newcommand{\DD}{\mathcal{DD}}

\newcommand{\supp}{\text{supp}}

\newcommand{\A}{\mathcal{A}}

\newcommand{\tr}{\text{tr}}
\newcommand{\aff}{\text{aff}}

\newtheorem{prop}{Proposition}
\newtheorem{lem}{Lemma}[section]
\newtheorem{thm}{Theorem}[section]
\newtheorem{cor}{Corollary}[section]
\newtheorem{remark}{Remark}[section]

\newtheorem{assumption}{Assumption}[section]

\crefname{thm}{Theorem}{Theorems}
\Crefname{thm}{Theorem}{Theorems}
\crefname{problem}{Problem}{Theorems}
\Crefname{problem}{Problem}{Theorems}
\Crefname{assump}{Assumption}{Theorems}
\crefname{assump}{Assumption}{Theorems}
\crefname{assumption}{Assumption}{Assumptions}
\Crefname{assumption}{Assumption}{Assumptions}
\crefname{conjecture}{Conjecture}{Theorems}
\Crefname{conjecture}{Conjecture}{Theorems}
\crefname{prop}{Proposition}{Propositions}
\Crefname{prop}{Proposition}{Propositions}
\crefname{cor}{Corollary}{Corollaries}
\Crefname{cor}{Corollary}{Corollaries}
\crefname{lem}{Lemma}{Lemmas}
\Crefname{lem}{Lemma}{Lemmas}
\theoremstyle{definition}
\crefname{conj}{Conjecture}{Conjectures}
\Crefname{conj}{Conjecture}{Conjectures}
\crefname{remark}{Remark}{Remarks}
\Crefname{remark}{Remark}{Remarks}
\crefname{rmk}{Remark}{Remarks}
\Crefname{rmk}{Remark}{Remarks}
\crefname{example}{Example}{Examples}
\Crefname{example}{Example}{Examples}
\crefname{align}{}{}
\Crefname{align}{}{}
\crefname{equation}{}{}
\Crefname{equation}{}{}

\def\eqref#1{{\normalfont(\ref{#1})}}

\usepackage{authblk}

\author{Hao Hu\footnote{School of Mathematical and Statistical Sciences, Clemson University, Clemson, USA; Email: \url{hhu2@clemson.edu}; Research supported by the Air Force Office of Scientific Research under award number FA9550-23-1-0508.}
\qquad Mingming Xu\footnote{School of Mathematical and Statistical Sciences, Clemson University, Clemson, USA; Email: \url{mingmix@g.clemson.edu}; Website: \url{https://mindy-xu.github.io/}; Research supported by the Air Force Office of Scientific Research under award number FA9550-23-1-0508.}}

\begin{document}

\title{Face Structure in Partial Facial Reduction of Semidefinite Relaxations of Binary Programs}

\break
\date{}
\maketitle

\medskip


\begin{abstract}
Semidefinite relaxations of binary optimization problems often
fail Slater's condition.  Full facial reduction restores strict feasibility but
may require auxiliary problems as costly as the original relaxation.  We
analyze partial facial reduction based on tractable inner approximations of the
positive semidefinite cone: the nonnegative diagonal, diagonally dominant (DD),
and scaled diagonally dominant (SDD) cones.

For this class of SDP
relaxations, we prove that every terminal face
identified by SDD-partial FR is described by variables fixed to zero or one and
groups of variables forced to be equal.  The face admits a full-column-rank
\(\{0,1\}\) basis matrix with disjoint column supports, yielding a natural
reduced formulation that can preserve sparsity.  For polyhedral inner
approximations, we characterize the face exposed at each step through the
inequalities active on the associated outer relaxation.

We also establish sharp limitations.  We construct a family
with singularity degree two for which DD- and SDD-partial FR require the
maximum possible number of steps.  We further show that factor-width
restrictions can prevent detection of dense valid affine equalities.  These
results characterize both the structure recovered by tractable partial FR and
the cost of restricting the exposing-matrix search.
\end{abstract}
{\bf Key Words:}
facial reduction, semidefinite programming, binary optimization

\section{Introduction}

Facial reduction removes degeneracy before a conic problem is
passed to a numerical solver.  For semidefinite programming (SDP), however,
each facial-reduction step generally requires
an auxiliary SDP to find an exposing matrix.  This issue is especially relevant when Slater's
condition fails, since the resulting lack of strict feasibility is associated
with ill-conditioning and weaker error bounds
\cite{sturm2000error,drusvyatskiy2017many,sremac2021error}.  Full facial reduction resolves
the degeneracy by identifying the minimal face containing the feasible set
\cite{borwein1981regularizing,borwein1981facial}, but
finding this face can still require semidefinite auxiliary problems, even with regularized formulations
\cite{lourenco2015solving}.

Partial facial reduction (partial FR)
reduces this cost
by restricting the exposing matrix to an inner approximation of the
positive semidefinite (PSD) cone \cite{permenter2018simplified}.
Permenter and Parrilo introduced this general preprocessing
framework, including reduced reformulations, dual recovery,
sparsity-preserving choices of inner approximations, and a software
implementation.  Building on their framework, we study
SDP relaxations
of binary optimization problems and characterize the structures recovered by
diagonal, DD, and SDD searches, as well as sharp limitations of these searches.
The choice of
inner approximation is therefore governed by two
requirements: the resulting auxiliary problem should be substantially cheaper than an SDP, but its exposing
matrices should still capture useful structure.  From the standpoint of
preprocessing cost, polyhedral and second-order-cone-representable inner
approximations are natural choices.  The nonnegative diagonal
and diagonally dominant (DD) cones lead to linear programming (LP) auxiliary
systems, while the scaled-diagonally dominant (SDD) cone, equivalently the
factor-width-two cone, leads to a second-order cone programming (SOCP)
auxiliary system
\cite{ahmadi2017optimization,ahmadi2019dsos,boman2005factor}.
By contrast, factor-width-$k$ cones with $k\geq3$ are represented using PSD blocks of order
$k$ and hence reintroduce semidefinite constraints into the auxiliary search
\cite{blekherman2022sparse,sootla2019block,wang2021polyhedral,
kirschner2024predictor,roig2022globally}.  Thus the diagonal, DD, and SDD cones are the
most relevant choices when partial FR is intended to serve as inexpensive
preprocessing.

Computational tractability alone does not determine whether such a restricted
search is useful.  A small inner approximation may fail to contain an exposing
matrix even when the current SDP remains facially degenerate, causing partial
FR to stop before reaching the minimal face.  Even when the
restricted search continues, partial FR may require substantially more
facial-reduction steps than full facial reduction.  We therefore ask three related questions:
What structure can LP- and SOCP-based searches recover from semidefinite relaxations of binary optimization problems?  When does an outer relaxation determine exactly the face exposed at a given step?  How much can restricting the search increase the number of facial-reduction steps?

We study SDP relaxations of a nonempty binary set
$P\subseteq\{0,1\}^n$.  We require
only that the relaxation contain the rank-one lift of every point in $P$ and
include the normalization and arrow constraints;
the relaxation may also include additional valid
linear constraints, including constraints derived from the
reformulation-linearization technique (RLT) or the
Sherali--Adams hierarchy
\cite{shor1987quadratic,lovasz1991cones,goemans1995improved,
laurent2003comparison,sherali1990hierarchy,sherali2013reformulation}.
Our structural characterization requires only the normalization and arrow
constraints and remains valid in the presence of additional valid linear
constraints.

Our main structural result characterizes the face identified when SDD-partial FR terminates: it is
defined by variables fixed to zero or one and groups of variables forced to be
equal.
This characterization directly yields a natural reduced formulation: fixed
variables can be eliminated, and each group of equal variables can be
represented by a single variable.
The resulting formulation retains a direct connection to the original binary variables and can preserve sparsity in the data matrices, whereas a generic matrix used to represent a face identified by full facial reduction may mix the original variables and produce dense reduced data matrices.
Although this characterization concerns the face identified at termination,
we also characterize the face identified at each partial facial-reduction step
for polyhedral inner approximations.  The associated LP outer relaxation
determines the next face, with explicit specializations to the nonnegative
diagonal and DD cones.

Beyond the structure of the identified faces, restricting the exposing-matrix
search can substantially increase the length of the reduction sequence.  We
construct a family for which full facial
reduction has singularity degree two, while DD- and SDD-partial FR require
$n$ steps, the maximum possible for a nonzero feasible SDP whose matrix
variable has order $n+1$.  Thus the long
reduction sequence is caused by restricting the exposing-matrix
search, rather than by a large singularity degree of the underlying SDP
formulation.  This contrasts with Sturm's classical family,
which has a matrix variable of the same order $n+1$ but singularity degree $n$
\cite{sturm2000error}.

Finally, we establish a limitation of partial FR based on the factor-width
hierarchy.  We construct a relaxation with $n$ binary variables for which
factor-width-$k$ partial FR fails, for every $k\leq n$, to detect a valid
affine equality involving all variables.  By contrast, affine facial reduction,
an alternative facial reduction algorithm for semidefinite relaxations of
binary programs \cite{hu2023affine}, detects this equality using affine-hull
information.

These results complement other recent approaches to degeneracy in SDP
relaxations of combinatorial problems.  Affine facial reduction
\cite{hu2023affine} constructs exposing matrices from affine-hull information,
whereas the primal approach in \cite{hu2025primal} exploits available feasible
solutions.  The singularity-degree results in \cite{hu2026shor} concern full
facial reduction for Shor relaxations under linear and quadratic descriptions
of binary sets.
The present paper instead provides a binary-specific
structural and worst-case analysis of partial FR based on tractable inner
approximations.
In particular,
our sharp step-count separation holds even though the underlying SDP has
singularity degree two.

\paragraph{Organization.}
\Cref{sec_prel} fixes notation and recalls SDP relaxations for binary programs,
the facial reduction algorithm, the diagonal, DD, and
factor-width cones, and partial FR via inner
approximations.  \Cref{sec:binary_relation_structure_selection} first
characterizes the structure of the face identified by SDD-partial FR and then
shows how the inequalities active throughout a polyhedral outer relaxation
determine the face exposed by diagonal- and DD-partial FR.
\Cref{sec:sharp_partial_fra_separation} establishes the sharp separation
between the number of steps required by DD- and SDD-partial FR and full
facial reduction.
\Cref{sec:factor_width_support_limitation} establishes the
factor-width-$k$ support limitation.

\section{Preliminaries}\label{sec_prel}

\subsection{Notation}
For a positive integer $p$, let $\mathbb{R}^p$ denote the $p$-dimensional
Euclidean space, $\mathbb{S}^p$ the space of $p\times p$ real symmetric
matrices, and $\mathbb{S}^p_+$ the positive semidefinite cone.  We use the
standard Euclidean inner product for vectors and the Frobenius inner product
$\langle X,Y\rangle:=\tr(XY)$ for symmetric matrices.

Unless a coordinate set is stated explicitly, $\mathbb{R}^N$ and
$\mathbb{S}^N$ are indexed by $\{1,\ldots,N\}$.  The lifted spaces
$\mathbb{R}^{n+1}$ and $\mathbb{S}^{n+1}$ are instead indexed by
$\{0,\ldots,n\}$.  Standard unit vectors are denoted by $e_i$, and
$\mathbf{1}$ denotes an all-ones vector; their dimensions are determined by
context.  For indices $i,j$ in the coordinate set of $\mathbb{S}^N$, define
\begin{equation}\label{eq:symmetric_basis_matrices}
E_{ii}:=e_ie_i^T,
\qquad
E_{ij}:=\frac{1}{2}(e_ie_j^T+e_je_i^T)\quad (i\neq j).
\end{equation}
Thus $\langle E_{ij},X\rangle=X_{ij}$ for every $X\in\mathbb{S}^N$.

For a vector $v$, its \emph{support} is
$\supp(v):=\{i\mid v_i\neq0\}$.  For a matrix $X$, $\range(X)$ and $\ker(X)$
denote its range and null space, respectively.  For a convex set $C$, $\ri(C)$
denotes its relative interior.  For a cone $K$, its dual cone is
$K^*:=\{z\mid \langle z,x\rangle\geq0\ \text{for all }x\in K\}$.
For a subset $S$ of an inner-product space, we write
$S^\perp:=\{z\mid \langle z,s\rangle=0\ \text{for all }s\in S\}$.
This convention also applies when $S$ is affine; equivalently,
$S^\perp=\operatorname{span}(S)^\perp$.
For a single vector or matrix $s$, the notation $s^\perp$ abbreviates
$\{s\}^\perp$.

\subsection{SDP relaxations for binary programs}
\label{subsec:sdp_relaxations_binary_programs}
Throughout, $P\subseteq\{0,1\}^n$ is assumed to be nonempty.
An \emph{SDP relaxation} of $P$ is an affine slice
$L\cap\mathbb{S}^{n+1}_+$, where $L\subseteq\mathbb{S}^{n+1}$ is an affine
subspace, such that
\[
\begin{pmatrix}1\\ x\end{pmatrix}
\begin{pmatrix}1\\ x\end{pmatrix}^T
\in L\cap\mathbb{S}^{n+1}_+
\qquad\text{for every }x\in P.
\]

\begin{assumption}[Normalization and arrow constraints]
\label{ass:binary_sdp_relaxation}
Every SDP relaxation $L\cap\mathbb{S}^{n+1}_+$ considered below includes the
normalization and \emph{arrow constraints}; that is, every $Y\in L$, and hence
every $Y\in L\cap\mathbb{S}^{n+1}_+$, satisfies
\begin{equation}\label{eq:binary_arrow_constraints}
Y_{00}=1,\qquad
Y_{ii}=Y_{0i},\qquad i=1,\ldots,n.
\end{equation}
The normalization constraint fixes the entry indexed by \(0\),
while each arrow constraint represents the binary identity
$x_i^2=x_i$ in the rank-one lift.
\end{assumption}

With the convention \eqref{eq:symmetric_basis_matrices}, we write the normalization and arrow constraints as
\begin{equation}\label{eq:arrow_matrix_defs}
\langle A_0,Y\rangle=1,\qquad
\langle A_i,Y\rangle=0\quad (i=1,\ldots,n),
\qquad
A_0 := E_{00}, \quad A_i := E_{ii}-E_{0i}.
\end{equation}

\subsection{Facial reduction algorithm (FRA)}

Facial reduction replaces the ambient PSD cone by successively smaller faces
that contain the feasible set.  To describe one facial-reduction step, consider
the SDP feasibility problem
\[
L\cap\mathbb{S}^N_+,
\qquad
L:=\{X\in\mathbb{S}^N\mid \A(X)=b\},
\]
where $L$ is an affine subspace, $\A:\mathbb{S}^N\to\mathbb{R}^m$ is linear and $\A^*$ is its adjoint.
Assume that $L\cap\mathbb{S}^N_+$ is nonempty.  If the problem fails
Slater's condition, a facial-reduction step
\cite{borwein1981regularizing,borwein1981facial} finds an exposing matrix $W$
in the set
\begin{equation}\label{eq:first_fr_auxiliary}
\bigl(L^\perp\cap\mathbb{S}^N_+\bigr)\setminus\{0\}
=
\left\{
\A^*(y) \in \mathbb{S}^N_+
\ \middle|\
y\in\mathbb{R}^m,\ b^Ty=0
\right\}\setminus\{0\}.
\end{equation}
We call \eqref{eq:first_fr_auxiliary} the
\emph{facial-reduction auxiliary system}.
It then replaces $\mathbb{S}^N_+$ by the proper face
$F:=\mathbb{S}^N_+\cap W^\perp$, which contains the feasible set.  Since
$W\succeq0$, if the columns of $V\in\mathbb{R}^{N\times r}$ form a basis of
$\ker(W)$, then
\[
F
=
\{X\succeq0\mid \range(X)\subseteq\ker(W)\}
=
\{VRV^T\mid R\in\mathbb{S}^r_+\}.
\]
Here $F$ is a face in the original matrix space $\mathbb{S}^N$, whereas
$\mathbb{S}^r_+$ is the smaller cone that parametrizes this face through
$X=VRV^T$.
Substituting $X=VRV^T$ therefore gives the equivalent facially reduced problem
\[
\widetilde L\cap\mathbb{S}^{r}_+,
\qquad
\widetilde L
:=\{R\in\mathbb{S}^{r}\mid VRV^T\in L\}.
\]
Equivalently, its constraint map is
$\widetilde\A(R):=\A(VRV^T)$, with adjoint
$\widetilde\A^*(y)=V^T\A^*(y)V$.

If this reduced problem still fails Slater's condition, the same procedure is
applied to $\widetilde L\cap\mathbb{S}^{r}_+$. Repeating these steps produces a chain of smaller PSD cones.
The procedure terminates when the current reduced problem satisfies Slater's
condition, at which point the corresponding face of $\mathbb{S}^N_+$ is the
minimal face containing the original feasible set.
The minimum number of facial-reduction steps required
to reach this minimal face is the \emph{singularity degree} of the SDP
feasibility problem.

Although every basis of $\ker(W)$ represents the same
face, the choice of \(V\) affects the structure of this reduced SDP.  A dense
basis can produce dense reduced constraint matrices even when the original
constraint matrices are sparse.

\subsection{Partial facial reduction via inner approximations}
\label{subsec:partial_fr_inner_approximations}
Solving the facial-reduction auxiliary system for a nonzero exposing matrix
can be computationally expensive.  Partial FR restricts the search to a
tractable inner approximation of the PSD cone.  Let
\(\mathcal{K}\subseteq\mathbb{S}^N_+\) be a closed convex cone.  For the feasible set
\(L\cap\mathbb{S}^N_+\), the first \(\mathcal K\)-partial FR auxiliary system
seeks a matrix \(W\) in
\begin{equation}\label{eq:partial_fr_search}
\bigl(L^\perp\cap\mathcal K\bigr)\setminus\{0\}
=
\left\{
\A^*(y)\in\mathcal K
\ \middle|\
y\in\mathbb{R}^m,\ b^Ty=0
\right\}\setminus\{0\}.
\end{equation}
Since \(\mathcal{K}\subseteq\mathbb{S}^N_+\), every solution of
\eqref{eq:partial_fr_search} also solves the facial-reduction auxiliary system
\eqref{eq:first_fr_auxiliary}.  Hence \(W\) exposes the proper face
$\mathbb{S}^N_+\cap W^\perp$, which contains the feasible
set.
Moreover, this inclusion implies
\(\mathbb{S}^N_+\subseteq\mathcal K^*\).  We therefore call
\(L\cap\mathcal K^*\) the \emph{outer relaxation} associated with
\(\mathcal K\).

After parametrizing the exposed face by a smaller PSD cone, we apply the
restricted auxiliary system to the reduced SDP and repeat.  At each iteration,
\(\mathcal K\) denotes the chosen inner approximation in the dimension of the
current PSD cone, and \(L\) denotes the affine subspace in the current reduced
formulation.  Whenever \(L^\perp\cap\mathcal K\neq\{0\}\), we choose
\[
W\in\ri(L^\perp\cap\mathcal K).
\]
We call this iterative procedure \emph{\(\mathcal K\)-partial FR}.

The procedure terminates when \(L^\perp\cap\mathcal K=\{0\}\) in the current
reduced formulation.  The resulting face, expressed in the original matrix
space, is the \emph{face identified by \(\mathcal K\)-partial FR}.  Choosing a
tractable cone such as a factor-width cone can reduce the cost of the
restricted search.

\begin{remark}[Representation dependence of partial FR]
\label{rem:partial_fra_representation}
By \eqref{eq:first_fr_auxiliary}, full facial reduction requires the exposing
matrix to be positive semidefinite.  Thus the facial-reduction auxiliary
system searches over
$L^\perp\cap\mathbb{S}^N_+$.  Suppose that the
current face is represented as
$F=\{VRV^T\mid R\in\mathbb S^r_+\}$, where $V$ has full column rank.  If
$\widehat V=VQ$ for a nonsingular matrix $Q$, then $\widehat V$ gives another
representation of the same face.  Since the PSD cone is invariant under
nonsingular congruence,
\[
V^T\A^*(y)V\succeq0
\quad\Longleftrightarrow\quad
\widehat V^T\A^*(y)\widehat V
=
Q^T\bigl(V^T\A^*(y)V\bigr)Q\succeq0.
\]
Thus the facial-reduction auxiliary system is invariant under
replacing $V$ by $\widehat V$.

In contrast, an inner approximation $\mathcal K\subseteq\mathbb{S}^r_+$ is
generally not invariant under such a change of basis:
\[
Z\in\mathcal K
\quad\not\Longleftrightarrow\quad
Q^TZQ\in\mathcal K.
\]
Consequently, $\mathcal K$-partial FR can depend on the matrix
$V$ used to represent the current face.  Whenever this choice matters below,
we specify $V$ explicitly.
\end{remark}

\Cref{fig:partial_fr_restricted_search} illustrates this restriction at the
level of the auxiliary search.

\begin{figure}[H]
\centering
	\begin{tikzpicture}[scale=.9, every node/.style={transform shape}]
		\node[visbox, draw=methPSD, fill=methPSD!6, minimum width=8.3cm, minimum height=2.5cm] (frbox) {};
		\node[font=\small\bfseries, text=methPSD, anchor=north west]
			at ([xshift=6pt,yshift=-6pt]frbox.north west) {Full facial reduction};
		\node[font=\small, text=methPSD, anchor=north west]
			at ([xshift=6pt,yshift=-7mm]frbox.north west) {$\A^{*}(y)\in\mathbb{S}^N_+$};
		\node[visbox, draw=methSDD, fill=methSDD!10, minimum width=3.5cm, minimum height=1.25cm,
			anchor=south east] (pfrbox) at ([xshift=-6pt,yshift=6pt]frbox.south east) {};
		\node[font=\small\bfseries, text=methSDD, align=center] at (pfrbox.center)
			{Partial FR\\[-1mm] $\A^{*}(y)\in \mathcal{K}$};
	\end{tikzpicture}
\caption{Partial FR restricts the auxiliary search from
$\mathbb{S}^N_+$ to a tractable cone $\mathcal{K}$.}
\label{fig:partial_fr_restricted_search}
\end{figure}

\subsection{Factor-width, DD, and SDD cones}
\label{subsec:factor_width_sdd_cones}
For $k\in\{1,\ldots,n\}$, the \emph{factor-width-$k$ cone} is
\begin{equation}\label{eq:fwk_def}
\FW_k^n
:=
\left\{
\sum_{t=1}^{q} g_tg_t^T
\;\middle|\;
q\geq1,\quad
g_t\in\mathbb R^n,\quad
|\supp(g_t)|\leq k\quad (t=1,\ldots,q)
\right\}.
\end{equation}
Thus \(\FW_k^n\subseteq\mathbb S^n_+\), and its dual consists of the matrices
in \(\mathbb S^n\) whose principal submatrices of order at most \(k\) are
positive semidefinite.

The first factor-width cone,
\(\D:=\FW_1^n\), is the nonnegative diagonal cone.  We also use the
diagonally dominant cone
\[
\DD
:=
\left\{
X\in\mathbb S^n
\ \middle|\
X_{ii}\geq\sum_{j\neq i}|X_{ij}|,\quad i=1,\ldots,n
\right\}.
\]
Both \(\D\) and \(\DD\) are polyhedral.  Their rank-one generators and the
corresponding defining inequalities of the dual cones are recorded in
\Cref{tab:polyhedral_cones}; these descriptions will be used in
\Cref{sec:outer_relaxation_characterizations}.
\begin{table}[ht]
\centering
\caption{Rank-one generators of $\D$ and $\DD$ and the corresponding dual
inequalities.}
\label{tab:polyhedral_cones}
\renewcommand{\arraystretch}{1.35}
\begin{tabularx}{\textwidth}{@{}cXX@{}}
\hline
\textbf{Cone} & \textbf{Rank-one generators} &
\textbf{Dual inequalities}\\
\hline
$\D$
& $e_ie_i^T$, $1\leq i\leq n$
& $Y_{ii}\geq0$, $1\leq i\leq n$\\
$\DD$
& $e_ie_i^T$, $1\leq i\leq n$;\newline
  $(e_i\pm e_j)(e_i\pm e_j)^T$, $1\leq i<j\leq n$
& $Y_{ii}\geq0$, $1\leq i\leq n$;\newline
  $Y_{ii}+Y_{jj}\pm2Y_{ij}\geq0$, $1\leq i<j\leq n$\\
\hline
\end{tabularx}
\end{table}

The second factor-width cone \(\FW_2^n\) is precisely the
\emph{scaled diagonally dominant (SDD) cone}: \(X\in\FW_2^n\) if and only if
there exists a positive diagonal matrix \(D\) such that \(DXD\in\DD\)
\cite{boman2005factor,ahmadi2019dsos}.  In particular,
\[
\D\subseteq\DD\subseteq\FW_2^n\subseteq\mathbb S^n_+.
\]
We will also use the following equivalent characterization.  For
\(X\in\mathbb S^n\), define its \emph{comparison matrix} by
\begin{equation}\label{eq:comparison_matrix_def}
C(X)_{ii}=X_{ii},\qquad C(X)_{ij}=-|X_{ij}|\quad (i\neq j).
\end{equation}
Then
\[
X\in\FW_2^n
\quad\Longleftrightarrow\quad
C(X)\succeq0.
\]
This characterization is standard; see \cite{boman2005factor}.

\section{Structure of faces identified by partial facial reduction}
\label{sec:binary_relation_structure_selection}

This section characterizes the faces identified by partial FR using the
diagonal, DD, and SDD cones.  We first show that SDD-partial FR identifies
faces described by variables fixed to zero or one and groups of variables
forced to be equal.  We then specialize this description to diagonal- and
DD-partial FR.  For these polyhedral cones, we also show how the associated
outer relaxation determines the face exposed at an individual step.

The hierarchy of these searches and the structures they can identify are
summarized in \Cref{fig:partial_fra_nested_settings}.  Here and below, $\D$
and $\DD$ denote the corresponding cones in the current matrix space.

\begin{figure}[H]
\centering
\begin{tikzpicture}[
		scale=0.9,
		every node/.style={transform shape},
		nestbox/.style={rounded corners=3pt, line width=.8pt},
		nesttext/.style={anchor=north west, align=left, font=\scriptsize}
]
	\node[nestbox, draw=methPSD, fill=methPSD!5,
		minimum width=14.5cm, minimum height=5.2cm] (psd) {};
	\node[nesttext, text=methPSD, text width=13.4cm]
		at ([xshift=8pt,yshift=-7pt]psd.north west)
			{\textbf{Positive semidefinite cone} \(\mathbb{S}^{n+1}_+\)\\
		full facial reduction; unrestricted
		exposing-matrix
		search};

	\node[nestbox, draw=methSDD, fill=methSDD!8,
		minimum width=13.0cm, minimum height=3.75cm,
		anchor=south east] (sdd)
		at ([xshift=-7pt,yshift=7pt]psd.south east) {};
		\node[nesttext, text=methSDD, text width=12.0cm]
			at ([xshift=8pt,yshift=-7pt]sdd.north west)
				{\textbf{Scaled-diagonally dominant cone} \(\FW_2\)\\
				potentially more variables fixed to \(0\) or \(1\), and more equality groups};

	\node[nestbox, draw=methDD, fill=methDD!10,
		minimum width=9.5cm, minimum height=2.25cm,
		anchor=south east] (dd)
		at ([xshift=-7pt,yshift=7pt]sdd.south east) {};
	\node[nesttext, text=methDD, text width=8.0cm]
		at ([xshift=8pt,yshift=-7pt]dd.north west)
		{\textbf{Diagonally dominant cone} \(\DD\)\\
		variables fixed to \(0\) or \(1\); equality groups};

	\node[nestbox, draw=methD, fill=methD!13,
		minimum width=3.75cm, minimum height=.95cm,
		anchor=south east, text=methD, align=center, font=\scriptsize] (d)
		at ([xshift=-7pt,yshift=7pt]dd.south east)
		{\textbf{Nonnegative diagonal cone} \(\D\)\\
		variables fixed to \(0\)};
\end{tikzpicture}
\caption{Restricted facial-reduction searches and the structures they can
identify.  Enlarging the search cone from \(\D\) to \(\DD\) to
\(\FW_2^{n+1}\) can identify additional variables fixed to zero or one and
additional groups of variables forced to be equal; searching over
\(\mathbb S^{n+1}_+\) recovers unrestricted facial reduction.}
\label{fig:partial_fra_nested_settings}
\end{figure}

\subsection{Structure of faces identified by SDD-partial FR}
\label{sec:structured_partial_fra}
The SDD cone is the factor-width-two cone defined in
\Cref{subsec:factor_width_sdd_cones}.  Let $W\in\FW_2^{n+1}$ be nonzero.
By \eqref{eq:fwk_def}, \(W\) admits a decomposition
\[
W=\sum_{\ell=1}^p g_\ell g_\ell^T,
\qquad g_\ell\neq0,\quad
|\supp(g_\ell)|\leq2,\quad \ell=1,\ldots,p.
\]
For $Y\succeq0$, each $g_\ell^TYg_\ell$ is nonnegative, and hence
$\langle W,Y\rangle=0$ if and only if $Yg_\ell=0$ for every
$\ell=1,\ldots,p$.  Therefore
\begin{equation}\label{eq:fw2_exposed_face_two_coordinate}
\mathbb S^{n+1}_+\cap W^\perp
=
\left\{Y\succeq0\ \middle|\
Yg_\ell=0\ \text{ for }\ell=1,\ldots,p\right\}.
\end{equation}
Thus an arbitrary SDD exposing matrix imposes kernel
equations supported on at most two coordinates, with unrestricted nonzero
coefficients.

Suppose now that $L\cap\Snop$ is an SDP relaxation of
$P\subseteq\{0,1\}^{n}$ satisfying \Cref{ass:binary_sdp_relaxation}.  The
normalization and arrow constraints sharply restrict the
coefficients that can persist at termination.  We now characterize the
resulting terminal faces.

The argument begins with the following standard
support-graph description of the null space of a factor-width-two matrix.

\begin{prop}[Disjoint column supports]
\label{prop_sdd_disjoint_support_basis}
Let \(W\in\FW_2^{n+1}\).
Then $\ker(W)$ has a basis matrix $V$ whose columns have pairwise disjoint
supports, and such a basis can be computed in sparse form by a graph search
from any factor-width-two decomposition of \(W\).
\end{prop}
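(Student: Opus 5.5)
The plan is to reduce \(\ker(W)\) to a system of linear equations, each involving at most two coordinates, and then solve that system one connected component at a time. Fix a factor-width-two decomposition \(W=\sum_{\ell=1}^p g_\ell g_\ell^T\) with \(g_\ell\neq0\) and \(|\supp(g_\ell)|\leq2\). Since \(W\succeq0\), we have \(Wy=0\) if and only if \(y^TWy=\sum_\ell (g_\ell^Ty)^2=0\). Hence
\[
\ker(W)=\{y\in\mathbb R^{n+1}\mid g_\ell^Ty=0,\ \ell=1,\ldots,p\}.
\]
Each equation has one of two forms. If \(\supp(g_\ell)=\{i\}\), it reads \(y_i=0\). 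If \(\supp(g_\ell)=\{i,j\}\), it reads \(a y_i+b y_j=0\) with \(a,b\neq0\), which is equivalent to \(y_j=-(a/b)\,y_i\).

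Next we build the support graph \(G\) on the vertex set \(\{0,\ldots,n\}\). It has one edge \(\{i,j\}\) for each \(g_\ell\) with two-element support, weighted by the ratio \(-a/b\). We compute the connected components of \(G\) by a graph search. In each component \(C\) we choose a root \(r\) and a spanning tree. We then assign multipliers \(c_r=1\) and \(c_j=\rho\,c_i\) when the tree edge from \(i\) to \(j\) carries ratio \(\rho\). Since all ratios are nonzero, every \(c_i\neq0\). Along tree edges, any \(y\in\ker(W)\) satisfies \(y_i=c_i\,y_r\) for all \(i\in C\). Consequently the restriction of \(y\) to \(C\) is determined by the single scalar \(y_r\).

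The main step, and the only place needing care, is deciding whether a component survives. We declare \(C\) \emph{killed} in either of two cases:
\begin{enumerate}
\item some \(g_\ell\) with \(\supp(g_\ell)=\{i\}\subseteq C\) forces \(y_i=0\), and hence \(y_r=0\) because \(c_i\neq0\);
\item some non-tree edge \(\{i,j\}\) in \(C\) with equation \(a y_i+b y_j=0\) has \(a c_i+b c_j\neq0\), so that this equation reads \((a c_i+b c_j)\,y_r=0\) and again forces \(y_r=0\).
\end{enumerate}
If neither case occurs, every equation supported in \(C\) is satisfied by \(y_r c\) for every scalar \(y_r\). Here \(c\) denotes the vector with entries \(c_i\) on \(C\) and \(0\) elsewhere.

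Every equation is supported inside a single component, so the system decouples across components. Therefore \(\ker(W)\) is the direct sum, over the surviving components \(C\), of the lines spanned by the corresponding vectors \(c^{(C)}\). Isolated vertices with no single-support constraint survive and contribute the unit vectors \(e_i\). Let \(V\) be the matrix whose columns are the vectors \(c^{(C)}\) for the surviving components. Its columns have pairwise disjoint supports, namely the surviving components. They are nonzero, so they are linearly independent, and by the decoupling they span \(\ker(W)\). Finally, the construction uses only a graph search together with one consistency check per edge, so it runs in time linear in \(p+n\) and produces \(V\) in sparse form.
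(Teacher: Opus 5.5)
Your proof is correct and follows essentially the same route as the paper. Both arguments reduce $\ker(W)$ to the equations $g_\ell^Ty=0$, build the support multigraph with single-support generators as zero-forcing marks, propagate values from a root in each connected component, kill a component on a mark or an inconsistent edge, and take one disjoint-support vector per surviving component; the only difference is that you fix a spanning tree first and then test the non-tree (including parallel) edges, whereas the paper checks all edges to already-labeled vertices incrementally during the search.
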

\begin{proof}
For completeness, we give the standard support-graph argument for
factor-width-two decompositions; see, e.g.,
\cite{boman2005factor,permenter2018simplified}.
Choose a factor-width-two decomposition
\(W=\sum_{t=1}^p g_tg_t^T\), where \(g_t\neq0\) and
\(|\supp(g_t)|\leq2\) for \(t=1,\ldots,p\).
For any vector \(z\),
\[
z^TWz=\sum_{t=1}^{p}(g_t^Tz)^2.
\]
Thus \(z\in\ker(W)\) if and only if \(g_t^Tz=0\) for every \(t\).
We now build a multigraph \(G\) on the vertices \(\{0,\ldots,n\}\), together
with a set of marked vertices.  A marked vertex means that the corresponding
coordinate is forced to be zero.

For each \(t\), if \(\supp(g_t)=\{i\}\), then \(g_t^Tz=0\) imposes
\(z_i=0\); mark vertex \(i\).  If \(\supp(g_t)=\{i,j\}\), write
\(g_t=\alpha e_i+\beta e_j\), where \(\alpha\beta\neq0\), and add an edge
\(\{i,j\}\) to \(G\), allowing parallel edges from different vectors \(g_t\).
The equation \(g_t^Tz=0\) gives the two equivalent propagation rules
$z_j=-(\alpha/\beta)z_i$ and $z_i=-(\beta/\alpha)z_j$.

Suppose first that the graph is connected.  We distinguish three cases.  If it
contains a marked vertex \(i\), then \(z_i=0\) for every \(z\in\ker(W)\).  Since
every other vertex is connected to \(i\) by a path, the propagation rules force
\(z_j=0\) for all \(j\).  Thus \(\ker(W)=\{0\}\) in this case.

Next suppose that no vertex is marked.  Choose an arbitrary root vertex \(r\),
set \(z_r=1\), and label \(r\).  While an unlabeled vertex remains,
connectedness provides an unlabeled vertex \(u\) adjacent to at least one
labeled vertex.  For every edge between \(u\) and the labeled vertices adjacent
to \(u\), including all parallel edges, use the propagation rules to obtain a
proposed value for \(z_u\).  If all proposed values agree, label \(u\) with
their unique common value.  If they disagree, the edge equations force
\(z_r=0\), and connectedness then forces \(z=0\); terminate.  Repeat until a
conflict occurs or every vertex is labeled.

If all vertices are labeled without conflict, the resulting nonzero vector
\(z\) satisfies every edge equation, and the propagation rules show that every
vector in \(\ker(W)\) is a scalar multiple of \(z\).

If the graph is disconnected, apply the same labeling procedure to each
connected component.  Components containing a marked vertex or producing a
conflict contribute nothing to the null space.
Each remaining component
contributes one vector, which we extend by zeros outside that component.  These
extended vectors have pairwise disjoint supports and span \(\ker(W)\).  Isolated
vertices that are not marked are included in the same construction and
contribute their coordinate vectors.  The construction is exactly the graph
search described above, applied to the support multigraph of the given
decomposition.
\end{proof}


For each SDD-partial FR step, we represent the exposed face
using a basis matrix whose columns have pairwise disjoint supports, as given
by \Cref{prop_sdd_disjoint_support_basis}.
For the first step, let
\(W\in L^\perp\cap\FW_2^{n+1}\) be the exposing matrix, let \(V\) be this
basis of \(\ker(W)\), and parametrize the exposed face by
\(Y=VRV^T\), where \(R\in\mathbb S^r_+\).
Throughout the paper, SDD-partial FR uses at every step the
disjoint-support basis obtained by the graph-search construction in
\Cref{prop_sdd_disjoint_support_basis}.  Thus the basis is computed directly
from the factor-width-two exposing matrix as part of the procedure.
We index the \(r\) columns of \(V\), and the corresponding
rows and columns of \(R\), by \(\{0,\ldots,r-1\}\).
Although these columns have disjoint supports, their nonzero
entries need not yet be equal.  The normalization and arrow constraints will
either enforce equal weights or allow the corresponding row and column of
\(R\) to be removed at a subsequent step.

\begin{lem}\label{lemma_e0}
For any $W\in L^\perp\cap\Snop$, the vector $e_0$ does not belong to
$\range(W)$.  Consequently, if $V$ is any basis matrix for $\ker(W)$, then at
least one column $v$ of $V$ satisfies $v_0\neq0$.
\end{lem}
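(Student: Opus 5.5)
The plan is to argue by contradiction, using a feasible point of the relaxation together with the normalization constraint. Since $P$ is nonempty, fix $x\in P$ and let $Y:=\begin{pmatrix}1\\ x\end{pmatrix}\begin{pmatrix}1\\ x\end{pmatrix}^T\in L\cap\Snop$. For $W\in L^\perp\cap\Snop$ we have $\langle W,Y\rangle=0$, where $L^\perp=\operatorname{span}(L)^\perp$ by the paper's convention. Because $W\succeq0$ and $Y\succeq0$, this forces $WY=0$, so $u:=(1,x^T)^T\in\ker(W)$. The point of this step is to produce a kernel vector whose $0$-th coordinate equals $1$.

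Next, suppose for contradiction that $e_0\in\range(W)$. Since $W$ is symmetric, $\range(W)=\ker(W)^\perp$, so $e_0$ would be orthogonal to every vector of $\ker(W)$. In particular $e_0^Tu=u_0=1$ would have to vanish, which is a contradiction. Hence $e_0\notin\range(W)$.

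For the second claim, let $V$ be any basis matrix of $\ker(W)$. If every column $v$ of $V$ had $v_0=0$, then every vector in $\ker(W)$ would have zero $0$-th coordinate, so $e_0\in\ker(W)^\perp=\range(W)$, contradicting the first part. Alternatively, one can note directly that $u=Vc$ for some $c$ and $u_0=1\neq0$.

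There is no real obstacle here. The only care needed is using the affine-versus-linear convention for $L^\perp$, which guarantees $\langle W,Y\rangle=0$ for feasible $Y$, together with nonemptiness of $P$; the normalization $Y_{00}=1$ enters only through $u_0=1$. The same argument also works with any feasible $Y$ in place of the rank-one lift: $\langle W,Y\rangle=0$ gives $\range(Y)\subseteq\ker(W)$, and $Y_{00}=1$ shows that $Ye_0\in\range(Y)$ has nonzero $0$-th coordinate.
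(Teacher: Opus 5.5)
Your proof is correct and is essentially the paper's argument. Both take the rank-one lift $Y$ of some $x\in P$, deduce $WY=0$ from $\langle W,Y\rangle=0$ and positive semidefiniteness, and contradict $Y_{00}=1$; the paper writes $e_0=Wu\Rightarrow Ye_0=YWu=0$, while you use $(1,x^T)^T\in\ker(W)=\range(W)^\perp$, which is the same fact, and the second claim is handled identically via $\ker(W)^\perp=\range(W)$.
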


\begin{proof}
Choose $x\in P$.  Since $L\cap\Snop$
is an SDP relaxation for $P$, the matrix
$Y=\left(\begin{smallmatrix}1\\ x\end{smallmatrix}\right)
\left(\begin{smallmatrix}1\\ x\end{smallmatrix}\right)^T$ belongs to
$L\cap\Snop$.  Hence $\langle W,Y\rangle=0$. Since $W\succeq0$ and
$Y\succeq0$, this implies $YW=0$. If $e_0\in\range(W)$, then $e_0=Wu$ for
some vector $u$, and hence $Ye_0=YWu=0$.  Therefore
$Y_{00}=e_0^TYe_0=0$, contradicting the normalization constraint
$Y_{00}=1$ in \eqref{eq:binary_arrow_constraints}.

If every column of \(V\) had a zero zeroth entry, then
\(e_0\) would be orthogonal to \(\ker(W)\).  Since \(W\) is symmetric, this
would imply
\(e_0\in\ker(W)^\perp=\range(W)\), contradicting the first assertion.
\end{proof}

The next lemma identifies the unique basis column whose
zeroth entry is nonzero.

\begin{lem}\label{lem:zeroth_entry_column}
Let \(W\in L^\perp\cap\FW_2^{n+1}\), and let \(V\) be a basis
matrix for \(\ker(W)\) whose columns have pairwise disjoint supports, as
given by \Cref{prop_sdd_disjoint_support_basis}.
Then there is a unique column \(v\) of
\(V\) such that \(v_0\neq0\).  Moreover, \(v_i=v_0\) for every
\(i\in\supp(v)\).
\end{lem}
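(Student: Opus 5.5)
Existence follows from \Cref{lemma_e0}: since \(W\in L^\perp\cap\FW_2^{n+1}\subseteq L^\perp\cap\Snop\), some column \(v\) of \(V\) has \(v_0\neq0\). Uniqueness is immediate from the disjoint-support property. If two distinct columns both had a nonzero zeroth entry, then index \(0\) would lie in both supports, which is impossible.

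For the equal-weight claim, I will use the rank-one lifts of points of \(P\) together with the arrow constraints. Fix \(x\in P\) and set \(y:=(1,x)\), so that \(Y=yy^T\in L\cap\Snop\). As in the proof of \Cref{lemma_e0}, \(\langle W,Y\rangle=0\) with \(W,Y\succeq0\) gives \(YW=0\). Hence \(Wy=0\), and so \(y\in\ker(W)=\range(V)\). Write \(y=Vc\) for some coefficient vector \(c\). Because the columns of \(V\) have disjoint supports, each coordinate \(y_i\) with \(i\in\supp(v)\) equals \(c_v v_i\), where \(c_v\) is the coefficient of \(v\). At \(i=0\) this reads \(1=c_vv_0\), so \(c_v=1/v_0\). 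Consequently \(y_i=v_i/v_0\) for every \(i\in\supp(v)\).

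Since \(y_i=x_i\in\{0,1\}\), each ratio \(v_i/v_0\) lies in \(\{0,1\}\). On the support of \(v\) the ratio is nonzero, so it must equal \(1\), which gives \(v_i=v_0\). Note that this conclusion does not depend on the choice of \(x\in P\), and any single point of \(P\) suffices. Nonemptiness of \(P\) is the only hypothesis needed, and the arrow constraints are not even invoked beyond \(Y_{00}=1\).

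The main point needing care is justifying \(y\in\ker(W)\). This comes from \(y^TWy=\langle W,yy^T\rangle=0\) together with \(W\succeq0\). I also need to confirm that \(V\) has full column rank, so that \(c\) is uniquely determined. Both facts are routine.
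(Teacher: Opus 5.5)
Your proof is correct and follows the paper's argument. Existence comes from \Cref{lemma_e0} and uniqueness from disjoint supports; then a rank-one lift of some $x\in P$ is expressed through the kernel basis, and its zeroth coordinate pins down the coefficient of $v$. The paper phrases this at the matrix level, writing $Y=VRV^T$ and using $Y_{00}=v_0^2R_{kk}=1$ together with the arrow constraint $v_i^2R_{kk}=v_0v_iR_{kk}$. That is exactly your vector-level computation with $R=cc^T$, with the binary identity $x_i^2=x_i$ in place of $x_i\in\{0,1\}$.
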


\begin{proof}
Disjointness of the column supports gives uniqueness, while
\Cref{lemma_e0} gives existence.

Choose $x\in P$ and set
$Y=\left(\begin{smallmatrix}1\\x\end{smallmatrix}\right)
\left(\begin{smallmatrix}1\\x\end{smallmatrix}\right)^T$.  Since $Y$ belongs
to the face exposed by $W$, write
$Y=VRV^T$ for some $R\succeq0$.
Suppose that $v_i\neq0$ for some $i\geq1$.  If \(v\) is the
column of \(V\) indexed by \(k\), then $Y_{00}=v_0^2R_{kk}=1$.
Since \(v_0\neq0\), it follows that \(R_{kk}=v_0^{-2}>0\).
The arrow constraint $Y_{ii}=Y_{0i}$ gives
$v_i^2R_{kk}=v_0v_iR_{kk}$.  Since $v_i\neq0$ and $R_{kk}\neq0$, we have
$v_i=v_0$.
\end{proof}

\begin{sloppypar}
We next determine the binary relations encoded by this
basis.
Reorder the columns of $V$ so that the unique column with a nonzero zeroth
entry is indexed by $0$.  We say that a column \(v\) has \emph{equal weights}
on its support if \(v_i=v_j\) for all \(i,j\in\supp(v)\); otherwise, it has
\emph{unequal weights} on its support.  Scale every equal-weight column so that
its nonzero entries equal one.  These operations do not change the
represented face.  In particular, $V^Te_0=e_0$.
Because the column supports are disjoint, each
\(\ell\in\{1,\ldots,n\}\) either satisfies \(V^Te_\ell=0\) or belongs to the
support of a unique column \(v\).
If \(v\) is indexed by \(k\), then
\(V^Te_i=v_i e_k\) for every \(i\in\supp(v)\).
Set
\[
\widetilde L:=\{R\in\mathbb S^r\mid VRV^T\in L\}.
\]
In the classification below, \(R\in\widetilde L\) and \(Y=VRV^T\).
For arbitrary indices \(i,j\),
\begin{equation}\label{eq:face_entry_from_basis}
Y_{ij}=e_i^TYe_j=(V^Te_i)^TR(V^Te_j).
\end{equation}
This representation yields the following three cases.
\end{sloppypar}
\begin{enumerate}[label=\emph{(\roman*)},leftmargin=*]
\item \emph{Variables fixed to zero.}  If $V^Te_\ell=0$, then every matrix
$Y=VRV^T$ on the reduced face satisfies
\begin{equation}\label{eq:sdd_fixed_zero_entries}
Y_{0\ell}=0,\qquad Y_{\ell\ell}=0.
\end{equation}
Thus the binary variable $x_\ell$ is zero for every $x\in P$, and the
current partial FR step has identified \(x_\ell\) as fixed
to zero.

\item \emph{Equal weights.}  If \(v\) has equal weights on its support, then its
nonzero entries equal one by the normalization above.  There are two subcases.

\begin{enumerate}[label=\emph{(\alph*)},leftmargin=2em]
\item \emph{Variables fixed to one.}  If $0\in\supp(v)$, then $v$ is the
column indexed by $0$.  Hence \eqref{eq:face_entry_from_basis} gives, for every
$i,j\in\supp(v)$, the identity $Y_{ij}=R_{00}=Y_{00}$.
For \(R\in\widetilde L\), the normalization
\(Y_{00}=1\) therefore makes the principal submatrix indexed by
\(\supp(v)\) the all-ones matrix.  Hence \(x_i=1\) for every
\(x\in P\) and every \(i\in\supp(v)\setminus\{0\}\).

\item \emph{Variables forced to be equal.}  If $0\notin\supp(v)$, set
$S:=\supp(v)\subseteq\{1,\ldots,n\}$.  Then
\eqref{eq:face_entry_from_basis} gives $Y_{ij}=R_{kk}$ for $i,j\in S$ and
$Y_{0i}=R_{0k}$ for $i\in S$.
The arrow constraints give
\begin{equation}\label{eq:sdd_equal_weight_reduced_arrow}
R_{kk}=Y_{ii}=Y_{0i}=R_{0k}\qquad(i\in S).
\end{equation}
Consequently, the entries in the principal submatrix on \(S\)
and the entries \(Y_{0i}\), \(i\in S\), have a common value.  In particular,
every \(x\in P\) satisfies \(x_i=x_j\) for all \(i,j\in S\).
\end{enumerate}

\begin{sloppypar}
\end{sloppypar}

\item \emph{Unequal weights.}  If \(v\) has unequal weights on its support, then
$v_i\neq v_j$ for some $i,j\in\supp(v)$.  By
\Cref{lem:zeroth_entry_column}, such a column satisfies
$0\notin\supp(v)$.
For \(R\in\widetilde L\),
\eqref{eq:face_entry_from_basis} gives
\[
Y_{ii}=v_i^2R_{kk},\qquad
Y_{0i}=v_iR_{0k},
\qquad
Y_{jj}=v_j^2R_{kk},\qquad
Y_{0j}=v_jR_{0k}.
\]
The arrow constraints \eqref{eq:binary_arrow_constraints} therefore give
$v_iR_{kk}=R_{0k}=v_jR_{kk}$.  Since $v_i\neq v_j$, we have
\begin{equation}\label{eq:sdd_unequal_weight_reduced_zero}
R_{kk}=R_{0k}=0.
\end{equation}
Consequently, for every index $q\in\supp(v)$,
$Y_{qq}=v_q^2R_{kk}$ and $Y_{0q}=v_qR_{0k}$, and hence
$Y_{0q}=Y_{qq}=0$.
Unlike case \emph{(i)}, these indices correspond to nonzero
rows of \(V\).  The next lemma shows that the reduced SDD auxiliary system
contains the diagonal exposing matrix needed to remove the \(k\)th row and
column of \(R\).
\end{enumerate}

\begin{cor}[Structure after one SDD-partial FR step]
\label{cor:sdd_one_step_structure}
Let \(V\) be a basis matrix for the kernel of the exposing
matrix, with pairwise disjoint column supports,
chosen in one SDD-partial FR step applied to an SDP satisfying
\Cref{ass:binary_sdp_relaxation}.  Its
columns can be reordered and scaled so that $V^Te_0=e_0$, and every column is
either a $\{0,1\}$ column or an unequal-weight column.  For
$\widetilde L:=\{R\in\mathbb{S}^{r}\mid VRV^T\in L\}$,
the reduced problem $\widetilde L\cap\mathbb{S}^r_+$ satisfies
\Cref{ass:binary_sdp_relaxation}.  More precisely, every
$R\in\widetilde L$ satisfies $R_{00}=1$ and, for $k=1,\ldots,r-1$,
\[
\begin{aligned}
R_{kk}&=R_{0k}
&&\text{if the $k$th column of $V$ is a $\{0,1\}$ column},\\
R_{kk}&=0,\quad R_{0k}=0
&&\text{if the $k$th column of $V$ has unequal weights}.
\end{aligned}
\]
\end{cor}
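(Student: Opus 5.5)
The plan is to assemble the corollary from \Cref{lem:zeroth_entry_column} and the case analysis (i)--(iii) above; the only new work is to check that reordering and scaling is harmless and to read off the reduced constraints in the $R$ coordinates.

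\textbf{Normalizing $V$.} By \Cref{lem:zeroth_entry_column}, exactly one column $v$ of $V$ has $v_0\neq0$, and it satisfies $v_i=v_0$ on its support. I move this column to index $0$ and divide it by $v_0$, so that it becomes a $\{0,1\}$ column with $v_0=1$. Every other column has $0\notin\supp(v)$, so $V^Te_0=e_0$. Each remaining column either has equal weights, and I rescale it so that its nonzero entries equal one, or it has unequal weights. Both operations amount to $V\mapsto VQ$ with $Q$ a permutation times a nonsingular diagonal matrix. Hence the set $\{VRV^T\mid R\succeq0\}$ is unchanged, and the column supports stay pairwise disjoint.

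\textbf{Reduced normalization and arrow constraints.} I use \eqref{eq:face_entry_from_basis} with $V^Te_0=e_0$. For $R\in\widetilde L$, the constraint $Y_{00}=1$ gives $R_{00}=1$. Now fix $k\geq1$ and choose any $i\in\supp(v^{(k)})$, which is nonempty because $V$ has full column rank; then $V^Te_i=v_ie_k$.
\begin{itemize}
\item If the $k$th column is a $\{0,1\}$ column, then $v_i=1$, so $Y_{ii}=R_{kk}$ and $Y_{0i}=R_{0k}$. The arrow constraint then gives $R_{kk}=R_{0k}$, which is \eqref{eq:sdd_equal_weight_reduced_arrow}.
\item If the $k$th column has unequal weights, case (iii) applies verbatim and yields \eqref{eq:sdd_unequal_weight_reduced_zero}, that is, $R_{kk}=R_{0k}=0$.
\end{itemize}

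\textbf{Assumption for the reduced problem.} The constraint $R_{kk}=0=R_{0k}$ is a special case of $R_{kk}=R_{0k}$. So every $R\in\widetilde L$ satisfies $R_{00}=1$ and $R_{kk}=R_{0k}$ for all $k$, which is exactly \Cref{ass:binary_sdp_relaxation} in dimension $r$.

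One point still needs checking: that $\widetilde L\cap\mathbb S^r_+$ is an SDP relaxation of some binary set, so that the assumption applies in the intended sense. For $x\in P$, the lift $Y$ lies in the face, so $Y=VRV^T$ for a unique $R$, because $V$ has full column rank. I expect this $R$ to be the rank-one lift $\binom{1}{z}\binom{1}{z}^T$ of a binary $z$. Here $z_k$ is the common value of $x_i$ on $\supp(v^{(k)})$ for equal-weight columns, and $z_k=0$ for unequal-weight columns, where $Y_{qq}=0$ forces $x_q=0$. To verify this I would show $Y=V\binom{1}{z}\binom{1}{z}^TV^T$ directly:
\begin{itemize}
\item on the support of column $0$, all $x_i=1$;
\item on the support of each equal-weight column, the $x_i$ are equal;
\item coordinates with $V^Te_\ell=0$ have $x_\ell=0$.
\end{itemize}
Matching $Y$ on these coordinate classes is the main bookkeeping step. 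Once it is done, uniqueness of $R$ finishes the argument.
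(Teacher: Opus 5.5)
Your proposal is correct and follows essentially the same route as the paper. Both normalize $V$ via \Cref{lem:zeroth_entry_column}, read off $R_{00}=1$ from $V^Te_0=e_0$, and obtain the reduced arrow equations from \eqref{eq:sdd_equal_weight_reduced_arrow} and \eqref{eq:sdd_unequal_weight_reduced_zero}.

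Your last step is not in the paper's proof. There you check that each lift $\binom{1}{x}\binom{1}{x}^T$ with $x\in P$ equals $V\binom{1}{z}\binom{1}{z}^TV^T$ for a binary $z$. A quicker way to see this is to write $\binom{1}{x}=Vw$, so that $R=ww^T$ by uniqueness. The check is correct, and it makes the inductive reuse of \Cref{lemma_e0} and \Cref{lem:zeroth_entry_column} literally valid, since those proofs begin with ``choose $x\in P$''.
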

\begin{proof}
The columns of $V$ have disjoint supports by
\Cref{prop_sdd_disjoint_support_basis}.  By
\Cref{lem:zeroth_entry_column}, the unique column containing the zeroth entry
has equal nonzero entries.  Scale this column and every other equal-weight
column so that their nonzero entries equal one.

Since $V^Te_0=e_0$, \eqref{eq:face_entry_from_basis} and the normalization
constraint give $R_{00}=1$.  The remaining equalities follow from
\eqref{eq:sdd_equal_weight_reduced_arrow} for a $\{0,1\}$ column and from
\eqref{eq:sdd_unequal_weight_reduced_zero} for an unequal-weight column.
\end{proof}

For the reduced problem, the SDD-partial FR auxiliary search
is over \(\widetilde L^\perp\cap\FW_2^r\).
We retain the symmetric matrix-unit convention from
\eqref{eq:symmetric_basis_matrices}, writing $\widetilde E_{ij}$ for the
corresponding matrices in $\mathbb{S}^r$.

The next lemma shows that every unequal-weight column yields
a diagonal exposing matrix \(\widetilde E_{kk}\) in the reduced problem.
\begin{lem}[Unequal weights yield a reduced diagonal exposing matrix]\label{lem:sdd_reduced_arrow_cleanup}
Use the above ordering and scaling of the columns of \(V\), so that
\(V^Te_0=e_0\).  Let \(v\) be a column of \(V\), and let \(k\) be its column
index.
If there exist two indices $i,j\in\supp(v)$ with $v_i\neq v_j$, then
\[
\widetilde E_{kk}\in \widetilde L^\perp\cap\FW_2^r.
\]
\end{lem}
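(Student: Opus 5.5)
We must show that \(\widetilde E_{kk}\) lies in \(\FW_2^r\) and in \(\widetilde L^\perp\). Membership in \(\FW_2^r\) is immediate: \(\widetilde E_{kk}=\tilde e_k\tilde e_k^T\) is a single rank-one term with support of size one, so it lies in \(\FW_1^r\subseteq\FW_2^r\).

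For \(\widetilde E_{kk}\in\widetilde L^\perp\), we use the paper's convention that \(S^\perp=\operatorname{span}(S)^\perp\) for affine \(S\). So it suffices to show that \(\langle\widetilde E_{kk},R\rangle=R_{kk}=0\) for every \(R\in\widetilde L\). Since any element of \(\operatorname{span}(\widetilde L)\) is a linear combination of elements of \(\widetilde L\), the conclusion then extends to the span by linearity.

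The required identity \(R_{kk}=0\) is exactly \eqref{eq:sdd_unequal_weight_reduced_zero}, which Corollary~\ref{cor:sdd_one_step_structure} records for unequal-weight columns. Recall how it arises, to check that no positive semidefiniteness was used. Take \(R\in\widetilde L\), so that \(Y=VRV^T\in L\). By the disjointness of supports, \(V^Te_i=v_i\tilde e_k\), and by \(V^Te_0=e_0\), formula \eqref{eq:face_entry_from_basis} gives \(Y_{ii}=v_i^2R_{kk}\) and \(Y_{0i}=v_iR_{0k}\), and likewise for \(j\). Here \(k\neq0\), because Lemma~\ref{lem:zeroth_entry_column} shows the column containing index \(0\) has equal weights. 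The arrow constraints hold for every \(Y\in L\) by Assumption~\ref{ass:binary_sdp_relaxation}, so dividing by \(v_i,v_j\neq0\) gives \(v_iR_{kk}=R_{0k}=v_jR_{kk}\). Since \(v_i\neq v_j\), this yields \(R_{kk}=0\).

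The main point needing care is that \(\widetilde L\) is affine rather than linear, so orthogonality must be checked on its span. Because the identity \(R_{kk}=0\) holds on all of \(\widetilde L\) and not only on \(\widetilde L\cap\mathbb S^r_+\), this causes no difficulty. If \(\widetilde L\) is empty, the statement is vacuous.
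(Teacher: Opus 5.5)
Your proof is correct and is essentially the paper's argument seen from the primal side. The paper notes that the reduced arrow matrices $V^TA_iV=v_i^2\widetilde E_{kk}-v_i\widetilde E_{0k}$ and $V^TA_jV$ lie in the linear space $\widetilde L^\perp$ and forms $v_jV^TA_iV-v_iV^TA_jV=v_iv_j(v_i-v_j)\widetilde E_{kk}$, whereas you combine the same two arrow equations to get $R_{kk}=0$ on all of $\widetilde L$ and then use $\widetilde L^\perp=\operatorname{span}(\widetilde L)^\perp$. The only difference is that the paper's version exhibits an explicit multiplier for the auxiliary system.
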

\begin{proof}
By \Cref{lem:zeroth_entry_column}, an unequal-weight column cannot contain the
index \(0\) in its support.  Hence \(i,j\in\{1,\ldots,n\}\).
By \eqref{eq:arrow_matrix_defs}, the $i$th arrow constraint has data matrix
$A_i=E_{ii}-E_{0i}$.  Since $V^Te_0=e_0$ and $V^Te_i=v_ie_k$, the
corresponding reduced data matrix is
\[
\begin{aligned}
V^TA_iV
&=
V^TE_{ii}V-V^TE_{0i}V
\\
&=
(v_ie_k)(v_ie_k)^T
-\frac{v_i}{2}\bigl(e_0e_k^T+e_ke_0^T\bigr)
\\
&=
v_i^2\widetilde E_{kk}-v_i\widetilde E_{0k}.
\end{aligned}
\]
Similarly, the $j$th arrow constraint gives
$V^TA_jV=v_j^2\widetilde E_{kk}-v_j\widetilde E_{0k}$.
Because the arrow equations have zero right-hand sides, both
reduced data matrices belong to \(\widetilde L^\perp\).
The linear combination
\[
v_jV^TA_iV-v_iV^TA_jV
=
v_iv_j(v_i-v_j)\widetilde E_{kk}
\]
cancels the off-diagonal terms.  The scalar $v_iv_j(v_i-v_j)$ is nonzero, and
$\widetilde L^\perp$ is a linear space, so
$\widetilde E_{kk}\in\widetilde L^\perp$.  Since
$\widetilde E_{kk}\in\FW_2^r$, the claim follows.
\end{proof}

The reduced problem again satisfies
\Cref{ass:binary_sdp_relaxation}, so the argument applies inductively.  At any
stage, an unequal-weight column produces a nonzero diagonal exposing matrix
by \Cref{lem:sdd_reduced_arrow_cleanup}.  Consequently, no such column can
remain when SDD-partial FR terminates.

\begin{cor}[Face identified by SDD-partial FR]
\label{cor:sdd_identified_binary_basis}
\mbox{}\\
Suppose that, at each step, SDD-partial FR uses a basis matrix
with pairwise disjoint column supports, as given by
\Cref{prop_sdd_disjoint_support_basis}.
If it terminates at the
face $F$, then $F=\{VRV^T\mid R\in\mathbb{S}^r_+\}$ for a
$\{0,1\}$ matrix $V$ with full column rank and disjoint column supports.
\end{cor}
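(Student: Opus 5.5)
We argue by induction on the number of SDD-partial FR steps. The invariant we carry is this: after each step, the current face is $\{V_tR V_t^T\mid R\in\mathbb S^{r_t}_+\}$, where $V_t=V^{(1)}V^{(2)}\cdots V^{(t)}$ is a product of the per-step basis matrices, and the reduced problem satisfies \Cref{ass:binary_sdp_relaxation}. The second part is given by \Cref{cor:sdd_one_step_structure}, so at every step the lemmas above apply to the current reduced problem, with the current dimension in place of $n+1$. By \Cref{cor:sdd_one_step_structure} we may, without changing the face, reorder and scale each per-step basis $V^{(t)}$. After this, its columns are either $\{0,1\}$ columns or unequal-weight columns, their supports are pairwise disjoint, and $V^{(t)T}e_0=e_0$.

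\textbf{Termination step.} At termination, $\widetilde L^\perp\cap\FW_2^{r}=\{0\}$ for the current reduced problem. Suppose the last basis $V^{(T)}$ still had an unequal-weight column with index $k$. Then \Cref{lem:sdd_reduced_arrow_cleanup} would give $\widetilde E_{kk}\neq0$ in this set, which contradicts termination. So every column of the last basis is a $\{0,1\}$ column. The earlier bases can contain unequal-weight columns, so this does not yet show that the full product is a $\{0,1\}$ matrix. For that, the plan is to show that the class of matrices with full column rank and pairwise disjoint column supports is closed under multiplication. Then we track the entries of the product separately.

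\textbf{Closure under products.} Let $A$ ($m\times p$) and $B$ ($p\times q$) both have disjoint column supports and full column rank. Each row of $A$ has at most one nonzero entry. Column $j$ of $AB$ equals $\sum_{s\in\supp(B_{:j})}B_{sj}A_{:s}$, so its support is $\bigcup_{s\in\supp(B_{:j})}\supp(A_{:s})$. Distinct columns $j$ of $B$ have disjoint sets $\supp(B_{:j})$, and distinct columns of $A$ have disjoint supports, so the columns of $AB$ have disjoint supports. Full column rank of $AB$ holds because $A$ is injective and $B$ has full column rank. The nonzero entries of $AB$ are single products $A_{is}B_{sj}$, because row $i$ of $A$ has only one nonzero entry.

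\textbf{Entries of the final product.} The main obstacle is the nonzero entries of the product: the earlier bases may contain unequal-weight columns whose entries are not all $1$. The plan is to exploit the fact that, for an unequal-weight column $k$ of some $V^{(t)}$, the reduced problem has $R_{kk}=0$ on every feasible point. The resulting diagonal exposing matrix $\widetilde E_{kk}$ lies in $\widetilde L^\perp\cap\FW_2$. Because the procedure picks $W$ in $\ri(\widetilde L^\perp\cap\FW_2)$, the next exposing matrix $W$ has $W_{kk}>0$. In the graph search of \Cref{prop_sdd_disjoint_support_basis}, coordinate $k$ is then marked (or lies in a component forced to zero), so row $k$ of $V^{(t+1)}$ is zero. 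Hence the unequal weights in column $k$ of $V^{(t)}$ get multiplied by zero and are discarded in the product.

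The part I expect to be delicate is exactly this relative-interior argument. One has to check that a factor-width-two decomposition of $W$ with $W_{kk}>0$ really kills coordinate $k$ in the kernel. It does, since $\widetilde E_{kk}\preceq cW$ for some $c>0$ when $W$ lies in the relative interior, so $\ker W\subseteq e_k^\perp$. By induction, the surviving nonzero entries of the final product are products of entries equal to $1$. The final $V$ is therefore a $\{0,1\}$ matrix with full column rank and disjoint column supports, as claimed.
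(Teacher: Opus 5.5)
Your proof is correct, but it handles the unequal-weight columns by a different mechanism than the paper.

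The paper does not track entries through the product. It only records that disjoint column supports survive when $V$ is replaced by $VU$. At termination it applies \Cref{lem:sdd_reduced_arrow_cleanup} directly to the accumulated basis $V$, using the original $L$, so that $\widetilde L=\{R\mid VRV^T\in L\}$ is exactly the terminal reduced problem. The lemma's proof uses only the disjoint-support structure, the normalization of the zeroth column, and the arrow constraints. Hence any unequal-weight column of the accumulated basis would give a nonzero element of the terminal cone $\widetilde L^\perp\cap\FW_2^r$, whether it came from the last step or survived from an earlier one. So every column has equal weights, and a final rescaling yields a $\{0,1\}$ matrix.

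You instead apply the lemma only to per-step bases, exactly within its stated hypotheses. For earlier steps you use the relative-interior selection rule: $\widetilde E_{kk}\preceq cW$, hence $\ker W\subseteq e_k^\perp$, so every unequal-weight column is annihilated at the very next step. The single-product structure of the entries of $AB$ then shows that the product is already a $\{0,1\}$ matrix.

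The two routes trade off as follows.
\begin{itemize}
\item The paper's argument is shorter and never uses the choice $W\in\ri(\widetilde L^\perp\cap\FW_2)$. It works for any terminating sequence of nonzero SDD exposing matrices, but it tacitly extends the lemma from a single-step kernel basis to the accumulated one.
\item Your argument stays within the lemma's hypotheses and makes full column rank of the product explicit. It also gives finer information: unequal weights disappear one step after they appear, as in the construction of \Cref{sec:sharp_partial_fra_separation}.
\end{itemize}
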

\begin{proof}
If no reduction is performed, take $V=I$.  Otherwise, after the first step,
\Cref{cor:sdd_one_step_structure} gives the current face in the original
matrix space as $\{VRV^T\mid R\in\mathbb{S}^r_+\}$, where $V$ has disjoint
column supports.  The corresponding reduced problem
\[
\widetilde L\cap\mathbb{S}^r_+,
\qquad
\widetilde L:=\{R\in\mathbb{S}^r\mid VRV^T\in L\},
\]
again satisfies \Cref{ass:binary_sdp_relaxation}.

Apply the second SDD-partial FR step to
$\widetilde L\cap\mathbb{S}^r_+$, and let
\(U\in\mathbb R^{r\times s}\) be the basis matrix with
pairwise disjoint column supports given by
\Cref{prop_sdd_disjoint_support_basis}.
Writing
$R=U\widetilde R U^T$, the new face, viewed in the original matrix space, is
\[
\{(VU)\widetilde R(VU)^T\mid \widetilde R\in\mathbb{S}^s_+\}.
\]
If $u_\ell$ is a column of $U$, then
$\supp(Vu_\ell)=\bigcup_{k\in\supp(u_\ell)}\supp(Ve_k)$.  Since both $V$
and $U$ have disjoint column supports, so does $VU$.
Furthermore, \Cref{cor:sdd_one_step_structure}, now applied to the reduced
problem, shows that the problem obtained after the second step again satisfies
\Cref{ass:binary_sdp_relaxation}.  Replacing $V$ by $VU$ and
repeating this argument treats all subsequent steps.

At termination, the preceding iterative argument and
\Cref{lem:sdd_reduced_arrow_cleanup} show that the matrix $V$ in the
representation $F=\{VRV^T\mid R\in\mathbb S^r_+\}$ has no unequal-weight
column.  Hence every column has equal nonzero entries.
Scaling each column gives a $\{0,1\}$ matrix $V$ with disjoint column
supports, without changing the represented face $F$.
\end{proof}

The corollary gives a structural characterization of terminal SDD faces:
they record only fixed variables and groups of equal variables.  Indeed, if
\(F\) is the face identified by SDD-partial FR, then
\Cref{cor:sdd_identified_binary_basis} gives
$F=\{VRV^T\mid R\succeq0\}$ for a full-column-rank $\{0,1\}$ matrix $V$
whose columns have pairwise disjoint supports.
The space $\ker(V^T)$ is spanned by the standard unit vectors corresponding to
the zero rows of $V$ and by differences of standard unit vectors within the
support of each column.  Consequently, there exist sets
\[
I\subseteq\{1,\ldots,n\},
\qquad
M\subseteq\{(i,j)\mid 0\leq i<j\leq n\},
\]
such that
\begin{equation}\label{eq:sdd_terminal_face_equations}
F=
\left\{Y\succeq0\ \middle|\
\begin{aligned}
Ye_i&=0 &&\text{for }i\in I,\\
Y(e_i-e_j)&=0 &&\text{for }(i,j)\in M
\end{aligned}
\right\}.
\end{equation}
Compared with \eqref{eq:fw2_exposed_face_two_coordinate}, the conclusion is
stronger than two-coordinate support: at termination, the kernel vectors can
be chosen from \(e_i\) and \(e_i-e_j\), so arbitrary coefficient ratios do
not persist.
The equations indexed by \(I\) fix the corresponding
variables to zero.  A pair \((0,j)\in M\) fixes \(x_j\) to one, while a pair
\((i,j)\in M\) with \(i,j\geq1\) enforces \(x_i=x_j\).

The same basis also yields a natural reduced formulation.  Let \(S_p\) be the
support of the \(p\)th column of \(V\).  This column is the indicator vector
of \(S_p\), and the sets \(S_p\) are pairwise disjoint.
Hence, for every data matrix $B\in\mathbb{S}^{n+1}$,
\[
(V^TBV)_{pq}=\sum_{a\in S_p}\sum_{b\in S_q}B_{ab}.
\]
Thus the disjoint-support basis limits fill-in in the reduced data and can
preserve sparsity, in contrast with full FR, where a generic basis may produce
dense reduced data.

\subsection{The polyhedral case: diagonal- and DD-partial FR}
\label{sec:outer_relaxation_characterizations}

Diagonal- and DD-partial FR are special cases of SDD-partial FR, but their
polyhedral generator structure yields sharper descriptions of possible bases
for the kernel of an exposing matrix.  Under
\Cref{ass:binary_sdp_relaxation}, the two cases
specialize as follows.

\begin{enumerate}[label=\emph{(\roman*)},leftmargin=*]
\item \emph{Diagonal-partial FR.}
If diagonal-partial FR terminates at the face $F$, then
\eqref{eq:sdd_terminal_face_equations} reduces to
\[
F=\{Y\succeq0\mid Ye_i=0\ \text{for }i\in I\}
\]
for some $I\subseteq\{1,\ldots,n\}$; equivalently, $M=\varnothing$ in
\eqref{eq:sdd_terminal_face_equations}.  Indeed, an exposing matrix in $\D$ is
nonnegative diagonal, so a basis of its kernel is obtained by deleting each
standard unit column $e_i$ corresponding to a positive diagonal entry.  By
\Cref{lemma_e0}, the homogenizing column $e_0$ is never deleted.  Thus
diagonal-partial FR can only identify variables fixed to zero.

\item \emph{DD-partial FR.}
When DD-partial FR terminates, the identified face also has the form
\eqref{eq:sdd_terminal_face_equations}, by the same argument as for SDD-partial
FR.  At an intermediate step, however, the DD generator structure gives
additional information.
By \Cref{tab:polyhedral_cones}, every DD generator vector has entries in
$\{0,\pm1\}$.  The support-graph construction in
\Cref{prop_sdd_disjoint_support_basis} therefore gives a basis matrix for the
kernel whose columns have disjoint supports and entries in $\{0,\pm1\}$.  In
particular, if the nonzero entries of a column are not all equal, then the
column must contain both $+1$ and $-1$.
\end{enumerate}

The descriptions above concern the structure of the face identified at
termination.  We now consider a complementary question at an arbitrary step:
can the equations defining the next face be identified directly from the outer
relaxation?  For a polyhedral inner approximation, this identification follows
directly from Goldman--Tucker strict complementarity
\cite{goldman1956theory} and reflects the fact that facial reduction for a
polyhedral cone terminates after at most one step.  We record the result for
arbitrary nonzero positive semidefinite generators, and then apply it to $\D$ and $\DD$.

Fix nonzero matrices $G_i\in\mathbb S^N_+$ and define
\begin{equation}\label{eq:polyhedral_inner_cone}
\mathcal K
:=
\operatorname{cone}\{G_i\mid i=1,\ldots,p\}.
\end{equation}
Thus $\mathcal K$ is a polyhedral inner approximation of
$\mathbb{S}^N_+$, and
\[
\mathcal K^*
=
\{Y\in\mathbb{S}^N\mid \langle G_i,Y\rangle\geq0,
\ i=1,\ldots,p\}.
\]
Because $\mathbb{S}^N_+\subseteq\mathcal K^*$, the polyhedron
$L\cap\mathcal K^*$ is an outer relaxation of the SDP feasible set and is
nonempty.  Each generator $G_i$ defines one of its inequalities.  Let
$\mathcal J$ be the indices of the inequalities active throughout the outer
relaxation:
\begin{equation}\label{eq:active_generator_indices}
i\in\mathcal J
\quad\Longleftrightarrow\quad
\langle G_i,Y\rangle=0
\quad\text{for every }Y\in L\cap\mathcal K^*.
\end{equation}
The right-hand condition can be checked by solving
$\max\{\langle G_i,Y\rangle\mid Y\in L\cap\mathcal K^*\}$: it holds exactly
when the optimal value is zero, with the value taken as $+\infty$ if the LP is
unbounded.

\begin{prop}[The polyhedral outer relaxation determines the next face]
\label{prop:polyhedral_exact_characterization}
If $W\in\ri(L^\perp\cap\mathcal K)$, then
\begin{equation}\label{eq:polyhedral_active_face}
\mathbb S^N_+\cap W^\perp
=
\{Y\succeq0\mid YG_i=0
  \text{ for every }i\in\mathcal J\}.
\end{equation}
\end{prop}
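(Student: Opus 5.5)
We will prove the two inclusions by expressing \(W\) in terms of the generators and then appealing to LP duality (Goldman--Tucker) for the polyhedral pair \(L^\perp\cap\mathcal K\) and \(L\cap\mathcal K^*\).

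\emph{Step 1: reduce to the generators used by \(W\).} Define the index set
\[
\mathcal I:=\{i\mid G_i\in L^\perp\cap\mathcal K\ \text{is achievable as a summand, i.e. some }\textstyle W'=\sum_j\lambda_jG_j\in L^\perp\text{ has }\lambda_i>0\}.
\]
Since \(L^\perp\cap\mathcal K\) is the image of the polyhedral cone \(\Lambda:=\{\lambda\ge0\mid \sum_j\lambda_jG_j\in L^\perp\}\) under a linear map, its relative interior is the image of \(\ri(\Lambda)\). Hence \(W\in\ri(L^\perp\cap\mathcal K)\) can be written as \(W=\sum_i\lambda_iG_i\) with \(\lambda_i>0\) exactly for \(i\in\mathcal I\). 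To justify this, take any representation of \(W\) and add \(\varepsilon\) times a point of \(\ri\Lambda\), then use \(W\in\ri\) to subtract a small multiple of that point's image, yielding a representation with full support on \(\mathcal I\). Because each \(G_i\succeq0\), for \(Y\succeq0\) we have \(\langle W,Y\rangle=\sum_{i\in\mathcal I}\lambda_i\langle G_i,Y\rangle=0\) iff \(\langle G_i,Y\rangle=0\) for all \(i\in\mathcal I\), iff \(YG_i=0\) for all \(i\in\mathcal I\). Thus the left side of \eqref{eq:polyhedral_active_face} equals \(\{Y\succeq0\mid YG_i=0,\ i\in\mathcal I\}\).

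\emph{Step 2: show \(\mathcal I=\mathcal J\).} This is the main step, and it is a Goldman--Tucker statement. For \(\mathcal I\subseteq\mathcal J\): if \(W'=\sum\lambda_jG_j\in L^\perp\) with \(\lambda\ge0\) and \(Y\in L\cap\mathcal K^*\), then \(\langle W',Y\rangle=0\), since \(W'\in L^\perp\) and \(L^\perp\) is defined via the span of \(L\). Because every term \(\lambda_j\langle G_j,Y\rangle\) is nonnegative, each vanishes, so \(\lambda_i>0\) forces \(\langle G_i,Y\rangle=0\).

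For \(\mathcal J\subseteq\mathcal I\), take \(i\in\mathcal J\). Then the LP \(\max\{\langle G_i,Y\rangle\mid Y\in L\cap\mathcal K^*\}\) has optimal value \(0\). Writing \(L=\{Y\mid\A(Y)=b\}\), LP duality (feasible and bounded, both sides polyhedral) gives \(y\) and \(\mu\ge0\) with
\[
-G_i=\A^*(y)-\sum_j\mu_jG_j,\qquad b^Ty\le 0,
\]
with the optimal dual value \(-b^Ty\) matching the primal value \(0\) and signs arranged accordingly. Hence \(\sum_j\mu_jG_j+G_i\cdots\) lies in the span of \(\A^*(y)\) with \(b^Ty=0\), i.e. in \(L^\perp\), and its coefficient on \(G_i\) is at least \(1\). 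Therefore \(i\in\mathcal I\). The delicate points in this argument are the sign bookkeeping and the identification of \(L^\perp\) with \(\{\A^*(y)\mid b^Ty=0\}\) from \eqref{eq:first_fr_auxiliary}. Nonemptiness of \(L\cap\mathcal K^*\), which holds because it contains the SDP feasible set, is what guarantees strong duality.

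\emph{Step 3: conclude.} Combining Steps 1 and 2 gives \eqref{eq:polyhedral_active_face}. I expect Step 2's reverse inclusion, namely getting the dual certificate into \(L^\perp\cap\mathcal K\) with a positive weight on \(G_i\), to be the only nontrivial part. Alternatively, one could cite Goldman--Tucker strict complementarity directly for the homogeneous pair \((\Lambda,\ L\cap\mathcal K^*)\).
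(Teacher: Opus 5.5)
Your argument follows the paper's route: parametrize $L^\perp\cap\mathcal K$ by the multiplier cone $\{\lambda\geq0\mid\sum_j\lambda_jG_j\in L^\perp\}$, use that linear maps preserve relative interiors, and show that the coordinates which can be positive are exactly those in $\mathcal J$; your per-index LP-duality argument only unpacks the Goldman--Tucker step that the paper cites, since summing your certificates over $i\in\mathcal J$ yields the paper's $\widehat\lambda$. One fix: the dual of $\max\{\langle G_i,Y\rangle\mid Y\in L\cap\mathcal K^*\}$ is $\min\{b^Ty\mid \A^*(y)=G_i+\sum_j\mu_jG_j,\ \mu\geq0\}$, so strong duality gives $b^Ty=0$ and hence $G_i+\sum_j\mu_jG_j\in L^\perp$ with coefficient at least $1$ on $G_i$, whereas your displayed identity $-G_i=\A^*(y)-\sum_j\mu_jG_j$ rearranges to $\A^*(y)=\sum_j\mu_jG_j-G_i$, which has the wrong sign on $G_i$.
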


\begin{proof}
Define
\[
Q:=\left\{\lambda\in\mathbb R_+^p\ \middle|\
\sum_{i=1}^p\lambda_iG_i\in L^\perp\right\},
\qquad
T(\lambda):=\sum_{i=1}^p\lambda_iG_i.
\]
Then $T(Q)=L^\perp\cap\mathcal K$.  For any $\lambda\in Q$ and
$Y\in L\cap\mathcal K^*$,
\[
0=\left\langle\sum_{i=1}^p\lambda_iG_i,Y\right\rangle
=\sum_{i=1}^p\lambda_i\langle G_i,Y\rangle.
\]
All terms are nonnegative, so $\lambda_i>0$ implies $i\in\mathcal J$.
Conversely, the Goldman--Tucker strict-complementarity theorem
\cite{goldman1956theory}, applied to the LP feasibility system
$L\cap\mathcal K^*$, gives some $\widehat\lambda\in Q$ satisfying
$\widehat\lambda_i>0$ for every $i\in\mathcal J$.  It follows that
the coordinates of $Q$ that can be positive are precisely those indexed by
$\mathcal J$.  Since $Q$ is the intersection of a linear subspace with
$\mathbb R_+^p$,
\[
\ri(Q)=\{\lambda\in Q\mid
\lambda_i>0\ \text{for every }i\in\mathcal J\}.
\]
Since linear maps preserve relative interiors,
$T(\ri(Q))=\ri(L^\perp\cap\mathcal K)$.  Hence
$W=\sum_{i\in\mathcal J}\lambda_i G_i$ for some $\lambda_i>0$.
For $Y\succeq0$,
\[
\langle W,Y\rangle
=\sum_{i\in\mathcal J}\lambda_i\langle G_i,Y\rangle.
\]
The summands are nonnegative, and, because \(G_i,Y\succeq0\),
\(\langle G_i,Y\rangle=0\) is equivalent to \(YG_i=0\).
This proves \eqref{eq:polyhedral_active_face}.
\end{proof}

Applying the proposition with the generators in \Cref{tab:polyhedral_cones}
makes this identification explicit.  For $\D$, the next face is obtained by
imposing $Ye_k=0$ precisely for the inequalities associated with $e_ke_k^T$
that are active throughout the outer relaxation.  For $\DD$, one additionally
imposes $Y(e_k\pm e_\ell)=0$ precisely for the active inequalities associated
with $(e_k\pm e_\ell)(e_k\pm e_\ell)^T$.  Thus, the preceding LP tests provide
a direct characterization of the next face.

\section{Maximum partial FR length with singularity degree two}
\label{sec:sharp_partial_fra_separation}

Sturm's classical SDP family has a matrix variable of order $n+1$ and
singularity degree $n$ \cite{sturm2000error}.  Its exposing matrices can be
chosen from $\FW_2^{n+1}$, so restricting the exposing-matrix search to
$\FW_2^{n+1}$ does not increase the number of steps on that family:
SDD-partial FR also requires $n$ steps.  However, Sturm's formulation does not
include the normalization constraint and arrow constraints in
\Cref{ass:binary_sdp_relaxation}.  The family constructed below satisfies
this assumption and has singularity degree two,
yet both DD- and SDD-partial FR require
$n=(n+1)-1$ steps, the maximum possible for a nonzero feasible SDP over
$\mathbb S^{n+1}_+$.

At every partial FR step, the exposing matrix is chosen from
$\ri(L^\perp\cap\mathcal K)$, where $L$ is the affine subspace in
the current reduced formulation and $\mathcal K$ is the cone used by the
partial FR auxiliary system.
We use the basis matrices displayed below, whose columns have pairwise
disjoint supports, to parametrize the resulting faces.
The displayed matrices are precisely the bases produced by
the graph-search construction in
\Cref{prop_sdd_disjoint_support_basis}.

Let $m\geq1$ and $n=2m$.  Define vectors in
$\mathbb{R}^{n+1}$ by
\[
q_1=e_1+e_2,
\qquad
q_j=e_{2j-2}+e_{2j-1}+e_{2j}\quad(j=2,\ldots,m).
\]
The binary set
\[
P_m
:=
\{x\in\{0,1\}^{2m}\mid
q_j^T\binom{0}{x}=0,\
j=1,\ldots,m\}
\]
is $\{0\}$: the first equation gives $x_1=x_2=0$, and the
remaining equations then force the other variables to zero
in pairs.
Define the affine subspace
\[
L_m
:=
\left\{
Y\in\mathbb{S}^{n+1}
\ \middle|\
\begin{aligned}
Y_{00}&=1,\\
Y_{ii}&=Y_{0i} && (i=1,\ldots,n),\\
\langle q_jq_j^T,Y\rangle&=0 && (j=1,\ldots,m)
\end{aligned}
\right\}.
\]
Thus $L_m\cap\mathbb{S}^{n+1}_+$ is an SDP relaxation of $P_m$ satisfying
\Cref{ass:binary_sdp_relaxation}.

\begin{lem}[Singularity degree]
\label{lem:sharp_family_full_fr}
The SDP relaxation
$L_m\cap\mathbb{S}^{n+1}_+$ has singularity degree two.
\end{lem}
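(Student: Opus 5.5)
The plan is to compute the feasible set explicitly, exhibit a two-step facial reduction that reaches its minimal face, and then show that no single exposing matrix suffices.

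\emph{Step 1: the feasible set.} Let $Y\in L_m\cap\mathbb S^{n+1}_+$. Since $Y\succeq0$ and $\langle q_jq_j^T,Y\rangle=0$, we get $Yq_j=0$ for every $j$. Reading the zeroth row of $Yq_1=0$ gives $Y_{01}+Y_{02}=0$. The arrow constraints and $Y\succeq0$ give $Y_{0i}=Y_{ii}\geq0$, so $Y_{11}=Y_{22}=0$, and hence rows and columns $1,2$ of $Y$ vanish. Inductively, once $Y_{2j-2,2j-2}=0$, the zeroth row of $Yq_j=0$ gives $Y_{0,2j-1}+Y_{0,2j}=0$, so $Y_{2j-1,2j-1}=Y_{2j,2j}=0$. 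Therefore the feasible set is $\{E_{00}\}$, and the minimal face is $\{\alpha E_{00}\mid\alpha\geq0\}$. Since this face is proper, Slater's condition fails and the singularity degree is at least one.

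\emph{Step 2: two steps suffice.} In the first step take $W_1:=\sum_{j=1}^m q_jq_j^T$. It lies in $L_m^\perp\cap\mathbb S^{n+1}_+$ because each $\langle q_jq_j^T,\cdot\rangle$ constraint has zero right-hand side. It exposes the face
\[
F_1=\{Y\succeq0\mid Yq_j=0,\ j=1,\ldots,m\},
\]
which we represent as $\{VRV^T\mid R\succeq0\}$ for a basis $V$ of $\ker(W_1)$. For the second step, set $w:=\sum_j q_j$ restricted to the coordinates $\{1,\ldots,n\}$. Its entries are $w_i=2$ for even $i<2m$ and $w_i=1$ otherwise, so all are positive. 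Then $\sum_i w_iA_i$ lies in $L_m^\perp$, and for every $Y\in F_1$,
\[
\Big\langle \sum_i w_iA_i,Y\Big\rangle=\sum_i w_iY_{ii}-\Big(Y\sum_jq_j\Big)_0=\sum_i w_iY_{ii}.
\]
Consequently $W_2:=V^T\bigl(\sum_iw_iA_i\bigr)V=V^T\operatorname{Diag}(0,w)V$ is positive semidefinite and lies in the reduced $\widetilde L^\perp$. The face it exposes consists of the $Y\in F_1$ with $Y_{ii}=0$ for all $i\geq1$, which is exactly $\{\alpha E_{00}\}$. Hence the singularity degree is at most two.

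\emph{Step 3: one step does not suffice.} This is the main point. Any $W\in L_m^\perp\cap\mathbb S^{n+1}_+$ has the form $W=\sum_iy_iA_i+\sum_j\mu_jq_jq_j^T$; the coefficient of $A_0$ must vanish because $b^Ty=0$. No term in this expansion contributes to the $(0,0)$ entry, so $W_{00}=0$. Since $W\succeq0$, the entire zeroth row of $W$ vanishes, and this row equals $-\tfrac12 y_i$ in position $i$, so $y=0$. Thus $W=\sum_j\mu_jq_jq_j^T$, and $\range(W)\subseteq\operatorname{span}\{q_j\}$, which has dimension at most $m<n$. Hence $\dim\ker(W)\geq n+1-m\geq2$, and the face exposed by $W$ strictly contains the minimal face, which has rank one. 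Therefore the singularity degree is exactly two.
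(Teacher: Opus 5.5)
Your proof is correct and follows essentially the same route as the paper. You use the first exposing matrix $\sum_j q_jq_j^T$, then the arrow constraints weighted by $\sum_j q_j$ (the paper's $\lambda$), and finally the bound $\operatorname{rank}(W)\le m<n$ for every $W\in L_m^\perp\cap\mathbb S^{n+1}_+$ to rule out a single step. The differences are minor: you compute the feasible set $\{E_{00}\}$ directly by propagation and read off $W_{00}=0$ from the data matrices, whereas the paper gets $W_{00}=0$ from feasibility of $e_0e_0^T$, confirms the minimal face via Slater's condition after the second step, and also shows the $q_jq_j^T$ coefficients are nonnegative, a fact it reuses in \Cref{thm:sharp_n_step_partial_fra}.
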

\begin{proof}[Proof]
The rank-one lift $e_0e_0^T$ of $x=0$ belongs to
$L_m\cap\mathbb S^{n+1}_+$.  Hence every exposing matrix
$W\in L_m^\perp\cap\mathbb S^{n+1}_+$ satisfies
$0=\langle W,e_0e_0^T\rangle=W_{00}$, so its zeroth row and column vanish.
In any representation of \(W\) by the constraint matrices, its off-diagonal entries
in the zeroth row first force all arrow-constraint multipliers to vanish; the
\((0,0)\) entry then forces the normalization-constraint multiplier to
vanish.  Moreover,
for $j,\ell\in\{1,\ldots,m\}$, the $(2j-1)$st component of
$q_\ell$ is nonzero if and only if $\ell=j$.  Hence
$q_1,\ldots,q_m$ are linearly independent.
Writing \(Q=(q_1\ \cdots\ q_m)\), a left inverse of \(Q\)
shows that \(Q\operatorname{Diag}(\alpha)Q^T\succeq0\) holds if and only if
\(\alpha\geq0\).  Consequently,
\begin{equation}\label{eq:sharp_family_full_fr_exposing_matrices}
L_m^\perp\cap\mathbb{S}^{n+1}_+
=
\left\{
\sum_{j=1}^m\alpha_jq_jq_j^T
\ \middle|\
\alpha_j\geq0
\right\}.
\end{equation}
The matrix
$W_1:=\sum_{j=1}^m q_jq_j^T$ belongs to
$\ri(L_m^\perp\cap\mathbb S^{n+1}_+)$ and has rank $m$.
Let $\bar q_j\in\mathbb{R}^n$ consist of the last $n$ entries of
$q_j$, and let
$H:=\{u\in\mathbb{R}^{n}\mid \bar q_j^Tu=0,\ j=1,\ldots,m\}$.
Let the columns of $U\in\mathbb{R}^{n\times m}$ form a basis
of $H$.  After the first facial-reduction step, let
\[
V:=
\begin{pmatrix}
1 & 0\\
0 & U
\end{pmatrix},
\qquad
\lambda:=\sum_{j=1}^m\bar q_j
=\mathbf{1}+\sum_{j=1}^{m-1}e_{2j}.
\]
Every entry of $\lambda\in\mathbb{R}^n$ is positive, and
$U^T\lambda=0$.  Recall from \eqref{eq:arrow_matrix_defs} that
$A_i=E_{ii}-E_{0i}$ is the data matrix of the $i$th arrow constraint.
The weighted sum of the reduced arrow-constraint matrices is
\[
\sum_{i=1}^{n}\lambda_i V^TA_iV
=
\begin{pmatrix}
0 & 0\\
0 & U^T\operatorname{Diag}(\lambda)U
\end{pmatrix}.
\]
Since \(U\) has full column rank and
\(\operatorname{Diag}(\lambda)\succ0\), its lower-right block is positive
definite.  Thus, the second facial-reduction
step exposes the ray \(\{t e_0e_0^T\mid t\geq0\}\).  The normalization
constraint fixes \(t=1\); hence \(e_0e_0^T\) is a relative-interior feasible
point and the reduced problem satisfies Slater's condition.  A single
facial-reduction step cannot suffice: every
$W\in L_m^\perp\cap\mathbb S^{n+1}_+$ has rank at most
$m<n=2m$, whereas exposing this minimal face in a single
facial-reduction step would require rank
$n$.  Hence the singularity degree is exactly two.
\end{proof}

\begin{thm}[Maximum SDD-partial FR length]
\label{thm:sharp_n_step_partial_fra}
SDD-partial FR requires $2m=n$ steps on
$L_m\cap\mathbb S^{n+1}_+$.
\end{thm}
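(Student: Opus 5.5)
The plan is to run SDD-partial FR explicitly on $L_m\cap\mathbb S^{n+1}_+$ and show that it peels off exactly one coordinate per step, in the pattern
\[
(e_1+e_2)\to(e_1-e_2)\to(e_3+e_4)\to(e_3-e_4)\to\cdots,
\]
so that the pair $\{2j-1,2j\}$ costs exactly two steps. The upper bound is easy. Every step strictly decreases the order of the current PSD cone. Moreover, $e_0e_0^T$ (the lift of $x=0\in P_m$) is feasible, so the face never becomes $\{0\}$ and the order stays at least $1$. Hence at most $n$ steps occur, and the real content is the lower bound together with a precise description of each step.

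\textbf{A standing observation.} At every stage the current reduced problem contains the reduced image of $e_0e_0^T$, and the basis keeps $V^Te_0=e_0$ by \Cref{cor:sdd_one_step_structure}. Therefore every $W$ in the current set $\widetilde L^\perp\cap\FW_2^r\subseteq\mathbb S^r_+$ has $W_{00}=0$, hence a vanishing zeroth row and column. Writing $W$ as a combination of the reduced constraint matrices, the off-diagonal entries $W_{0k}$ kill the arrow multipliers, except in the combinations that cancel the $E_{0k}$ terms; by \Cref{lem:sdd_reduced_arrow_cleanup} these combinations produce exactly the diagonal matrices $\widetilde E_{kk}$ for unequal-weight columns. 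The normalization multiplier is already zero because $b^Ty=0$. So at each stage $W$ is a nonnegative combination of such $\widetilde E_{kk}$ plus a combination $\sum_j\alpha_j\tilde q_j\tilde q_j^T$ of the reduced quadratic constraints $\tilde q_j:=V^Tq_j$.

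\textbf{Step 1.} I show that $L_m^\perp\cap\FW_2^{n+1}=\operatorname{cone}\{q_1q_1^T\}$. By \eqref{eq:sharp_family_full_fr_exposing_matrices} it suffices to show that $\sum_j\alpha_jq_jq_j^T\in\FW_2$ with $\alpha\geq0$ forces $\alpha_j=0$ for $j\geq2$. I argue backwards from $j=m$ using the comparison matrix criterion $C(X)\succeq0$ on the principal block indexed by $\{2m-2,2m-1,2m\}$. On that block the diagonal is $(a,\alpha_m,\alpha_m)$ for some $a\geq0$ and all off-diagonal entries of $C$ equal $-\alpha_m$. Testing with $z=(t,1,1)$ gives $z^TCz=at^2-4\alpha_m t$, which is negative for small $t>0$ unless $\alpha_m=0$. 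With $\alpha_{j+1}=0$, the block for $q_j$ has the same shape, so induction gives $\alpha_m=\cdots=\alpha_2=0$.

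Hence $W=q_1q_1^T$, and the graph-search basis is
\[
V=(e_0,\ e_1-e_2,\ e_3,\ldots,e_n),
\]
in which the column $e_1-e_2$ has unequal weights. The reduced vector $\tilde q_2=V^Tq_2$ picks up coefficient $-1$ on that column, so $\tilde q_2$ still has support of size three.

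\textbf{Step 2.} Here \Cref{lem:sdd_reduced_arrow_cleanup} supplies $\widetilde E_{kk}$ for the unequal-weight column. Repeating the Step 1 comparison-matrix argument (the support-three $\tilde q_j$ still admit no positive multipliers) shows that the relative-interior exposing matrix is a positive multiple of $\widetilde E_{kk}$ alone. Deleting this column leaves the problem on coordinates $\{0,3,\ldots,n\}$. In it, $q_2$ has become $e_3+e_4$ and $q_3,\ldots,q_m$ are unchanged, which is an isomorphic copy of the instance $L_{m-1}$. Induction on $m$ then gives a total of $2+2(m-1)=2m=n$ steps.

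The main obstacle is making the uniqueness claims airtight at every intermediate stage. Specifically, $\ri(\widetilde L^\perp\cap\FW_2)$ must contain nothing beyond the single predicted generator, since any extra generator would let the procedure remove two coordinates in one step. To handle this, I would state an explicit invariant for the reduced instance after each odd and each even step: the shape of the $\tilde q_j$ and the list of unequal-weight columns. I would then verify the comparison-matrix test for a general nonnegative combination of the diagonal cleanup matrices and the $\tilde q_j\tilde q_j^T$ simultaneously, rather than treating those two families separately.
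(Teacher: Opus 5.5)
Your proposal is correct and follows essentially the paper's proof: the first step exposes $q_1q_1^T$, the second exposes the diagonal cleanup matrix from \Cref{lem:sdd_reduced_arrow_cleanup}, and the reduced problem is then exactly $L_{m-1}$, so the step count satisfies $T_m=2+T_{m-1}$. The difference is only technical. You eliminate the multipliers of the support-three $\tilde q_j$ by backward induction with the local test vector $(t,1,1)$ on each $3\times3$ block; in the second step that block's free first diagonal entry absorbs the $\widetilde E_{kk}$ term, so your test still applies there. The paper instead applies the all-ones test vector (supported on the variable coordinates, zero at the cleaned-up index in the second step) to the comparison matrix, and removes $\beta_2$ with a separate $3\times3$ determinant.
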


\begin{proof}
By \eqref{eq:sharp_family_full_fr_exposing_matrices}, every
$W\in L_m^\perp\cap\mathbb S^{n+1}_+$ has the form
$W=\sum_{j=1}^m\alpha_jq_jq_j^T$ with $\alpha_j\geq0$.
Let $z:=\sum_{i=1}^n e_i\in\mathbb R^{n+1}$.  Since every
$q_jq_j^T$ is entrywise nonnegative and \(q_1\) and \(q_j\) for \(j\geq2\)
have support sizes two and three,
respectively, \eqref{eq:comparison_matrix_def} gives
$z^TC(W)z=-3\sum_{j=2}^m\alpha_j$.  Hence
$C(W)\succeq0$ forces
$\alpha_2=\cdots=\alpha_m=0$.  Thus,
$L_m^\perp\cap\FW_2^{n+1}=\{\gamma q_1q_1^T\mid\gamma\geq0\}$, whose
relative interior consists of the positive multiples of $q_1q_1^T$.
Therefore the first SDD-partial FR step chooses a positive multiple of
$q_1q_1^T$ as its exposing matrix and exposes the face
$\mathbb S^{n+1}_+\cap(q_1q_1^T)^\perp$.

Use the following basis matrix for $\ker(q_1q_1^T)$:
\[
V_1=\begin{pmatrix}e_0 & e_2-e_1 & e_3 & \cdots & e_{2m}\end{pmatrix}
\in\mathbb{R}^{(2m+1)\times 2m}.
\]
The resulting reduced SDP is
\[
\widetilde L_1\cap\mathbb{S}^{2m}_+,
\qquad
\widetilde L_1
:=
\{R\in\mathbb{S}^{2m}\mid V_1RV_1^T\in L_m\}.
\]
Index the columns of \(V_1\), in the order displayed above, by
\(\{0,\ldots,2m-1\}\), and use the same indexing for the standard basis
\(\widetilde e_0,\ldots,\widetilde e_{2m-1}\) of the reduced space
\(\mathbb R^{2m}\).  Thus, columns \(0\) and \(1\) of \(V_1\) are \(e_0\) and
\(e_2-e_1\), respectively.
By \eqref{eq:arrow_matrix_defs}, the first two reduced arrow constraints satisfy
$V_1^T(A_1+A_2)V_1=2\widetilde e_1\widetilde e_1^T$, so
$\widetilde e_1\widetilde e_1^T$ is a diagonal exposing matrix
for the reduced SDP.  The remaining reduced data matrices are
$\widehat q_j\widehat q_j^T$, $j=2,\ldots,m$, where
\[
\widehat q_2=\widetilde e_1+\widetilde e_2+\widetilde e_3,
\qquad
\widehat q_j
=\widetilde e_{2j-3}+\widetilde e_{2j-2}+\widetilde e_{2j-1}
\quad(j\geq3).
\]
The same zeroth-row argument as above shows that every
$W\in\widetilde L_1^\perp\cap\mathbb{S}^{2m}_+$ has the form
\[
W=\alpha\widetilde e_1\widetilde e_1^T+
\sum_{j=2}^m\beta_j\widehat q_j\widehat q_j^T,
\qquad \alpha,\beta_j\geq0.
\]
The vectors
$\widetilde e_1,\widehat q_2,\ldots,\widehat q_m$ are linearly independent, so
the same full-column-rank argument used in the proof of
\mbox{\Cref{lem:sharp_family_full_fr}} shows that
positive semidefiniteness forces all displayed coefficients to be
nonnegative.
Define $z\in\mathbb{R}^{2m}$ by
$z_0=z_1=0$ and $z_i=1$ for $i=2,\ldots,2m-1$.
The diagonal term
\(\alpha\widetilde e_1\widetilde e_1^T\) contributes zero to \(z^TC(W)z\).
On the support of \(\widehat q_2\), the entries of \(z\) are \(0,1,1\);
hence \(\beta_2\widehat q_2\widehat q_2^T\) contributes
\(\beta_2(2-2)=0\).
For each $j\geq3$, the three entries of $z$ indexed by
$\supp(\widehat q_j)$ are equal to one, and hence
\[
z^TC(\widehat q_j\widehat q_j^T)z=3-6=-3.
\]
Moreover, all summands in $W$ are entrywise nonnegative, so there is no
cancellation inside the absolute values in
\eqref{eq:comparison_matrix_def}.  Therefore
$z^TC(W)z=-3\sum_{j=3}^m\beta_j$, so $C(W)\succeq0$ forces
$\beta_3=\cdots=\beta_m=0$.  If $\beta_2>0$, then the comparison matrix of the
remaining block on the coordinates
$\widetilde e_1,\widetilde e_2,\widetilde e_3$ is
\[
\begin{pmatrix}
\alpha+\beta_2 & -\beta_2 & -\beta_2\\
-\beta_2 & \beta_2 & -\beta_2\\
-\beta_2 & -\beta_2 & \beta_2
\end{pmatrix}.
\]
The cofactor of the upper-left entry is zero, so the
determinant is independent of $\alpha$; direct evaluation gives
$-4\beta_2^3<0$.  This contradicts positive semidefiniteness.  Thus
$\beta_2=0$, and
$\widetilde L_1^\perp\cap\FW_2^{2m}$ is the ray generated by
$\widetilde e_1\widetilde e_1^T$.
Therefore, the second SDD-partial FR step chooses
$\widetilde e_1\widetilde e_1^T$ as its exposing matrix.  The resulting face is
\[
\mathbb S^{2m}_+\cap
(\widetilde e_1\widetilde e_1^T)^\perp
=
\{R\in\mathbb S^{2m}_+\mid R\widetilde e_1=0\};
\]
equivalently, the row and column of $R$ indexed by $1$ are zero.

After these two steps, let
$V_2=\begin{pmatrix}e_0 & e_3 & \cdots & e_{2m}\end{pmatrix}
\in\mathbb R^{(2m+1)\times(2m-1)}$.  The current face, expressed in the
original matrix space, is
\[
\{V_2SV_2^T\mid S\in\mathbb S^{2m-1}_+\}.
\]
Index the columns of \(V_2\), in the order displayed above, by
\(\{0,\ldots,2m-2\}\), and use the same indexing for the standard basis
\(\bar e_0,\ldots,\bar e_{2m-2}\) of the reduced space
\(\mathbb R^{2m-1}\).  Thus, column \(0\) of \(V_2\) is \(e_0\), while column
\(i\) is \(e_{i+2}\) for \(i\geq1\).  Direct calculation gives
\[
V_2^Tq_2=\bar e_1+\bar e_2,
\qquad
V_2^Tq_j
=\bar e_{2j-4}+\bar e_{2j-3}+\bar e_{2j-2}
\quad(j=3,\ldots,m).
\]
After identifying $\bar e_i$ with $e_i$, these are precisely
$q_1,\ldots,q_{m-1}$ for the instance $L_{m-1}$; the normalization and
remaining arrow constraints also have the same form.
Thus, the SDP obtained after the first two SDD-partial FR steps is
\[
\left\{
S\in\mathbb S^{2m-1}_+
\ \middle|\
V_2SV_2^T\in L_m
\right\}
=
L_{m-1}\cap\mathbb S^{2(m-1)+1}_+.
\]
Let $T_m$ denote the number of SDD-partial FR steps on
$L_m\cap\mathbb S^{2m+1}_+$.
For \(m=1\),
the first step exposes
$(e_1+e_2)(e_1+e_2)^T$ and the second
removes the row and column associated with \(e_2-e_1\),
so $T_1=2$.  The recurrence obtained above therefore gives
\[
T_m=2+T_{m-1},
\qquad T_1=2,
\qquad T_m=2m=n.
\]

By the standard singularity-degree upper bound for a
nonzero feasible SDP over \(\mathbb S^{n+1}_+\), a facial-reduction sequence in
which every step exposes a proper face has at most \(n\) steps.  Hence this
family attains the bound.
\end{proof}

To illustrate the recursive reduction used in the proof,
consider \(m=2\): each SDD-partial FR step removes one row and column, giving
the following four-step sequence.
\begin{enumerate}[leftmargin=2em]
\item \emph{First step: $Y\in\mathbb S^5_+$.}
The data matrices associated with the two lifted quadratic equations are
\[
q_1q_1^T
=
\begin{pmatrix}
0&0&0&0&0\\
0&1&1&0&0\\
0&1&1&0&0\\
0&0&0&0&0\\
0&0&0&0&0
\end{pmatrix},
\qquad
q_2q_2^T
=
\begin{pmatrix}
0&0&0&0&0\\
0&0&0&0&0\\
0&0&1&1&1\\
0&0&1&1&1\\
0&0&1&1&1
\end{pmatrix}.
\]
The first step chooses $q_1q_1^T$ as its exposing matrix.  A matrix whose
columns form a basis of its kernel is
\[
V=
\begin{pmatrix}
1& 0&0&0\\
0&-1&0&0\\
0& 1&0&0\\
0& 0&1&0\\
0& 0&0&1
\end{pmatrix}
\in\mathbb R^{5\times4}.
\]
Thus $Y=VRV^T$ with $R\in\mathbb S^4_+$.

\item \emph{Second step: $R\in\mathbb S^4_+$.}
The transformed quadratic data matrices are
\[
V^Tq_1q_1^TV=0,
\qquad
V^Tq_2q_2^TV
=
\begin{pmatrix}
0&0&0&0\\
0&1&1&1\\
0&1&1&1\\
0&1&1&1
\end{pmatrix}.
\]
The reduced arrow constraints yield the exposing matrix
\[
\begin{pmatrix}
0&0&0&0\\
0&1&0&0\\
0&0&0&0\\
0&0&0&0
\end{pmatrix}
\in\mathbb S^4_+.
\]
This exposing matrix forces the row and column of \(R\) indexed by \(1\) to
vanish.  Deleting them leaves a matrix variable \(S\in\mathbb S^3_+\).

\item \emph{Third step: $S\in\mathbb S^3_+$.}
The remaining quadratic data matrix is
\[
\begin{pmatrix}
0&0&0\\
0&1&1\\
0&1&1
\end{pmatrix}.
\]
The third step chooses this matrix as its exposing matrix.  A matrix whose
columns form a basis of its kernel is
\[
U=
\begin{pmatrix}
1& 0\\
0&-1\\
0& 1
\end{pmatrix}
\in\mathbb R^{3\times2}.
\]
Thus $S=UTU^T$ with $T\in\mathbb S^2_+$.

\item \emph{Fourth step: $T\in\mathbb S^2_+$.}
The reduced arrow constraints yield the exposing matrix
\[
\begin{pmatrix}
0&0\\
0&1
\end{pmatrix}.
\]
This exposing matrix forces the row and column of \(T\) indexed by \(1\) to
vanish.  Hence the exposed face is the ray
\[
\left\{
t\begin{pmatrix}
1&0\\
0&0
\end{pmatrix}
\ \middle|\ t\geq0
\right\}.
\]
The reduced normalization constraint is \(T_{00}=1\), so \(t=1\) is a
relative-interior feasible point of this ray and the reduction terminates.
\end{enumerate}

\begin{cor}[Maximum DD-partial FR length]
\label{cor:sharp_n_step_dd_partial_fra}
DD-partial FR requires $2m=n$ steps on
$L_m\cap\mathbb S^{n+1}_+$.
\end{cor}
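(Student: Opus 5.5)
The plan is to rerun the proof of \Cref{thm:sharp_n_step_partial_fra} with $\DD$ in place of $\FW_2$. The key observation is that at every step the exposing matrix chosen by SDD-partial FR already lies in $\DD$. Since $\DD\subseteq\FW_2$, the DD-search set is contained in the SDD-search set. So whenever the SDD search set is a single ray whose generator belongs to $\DD$, the DD search set is exactly that same ray. In that case both relative interiors coincide, and DD-partial FR exposes the same face.

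I will proceed by induction on $m$, mirroring the recursion $T_m=2+T_{m-1}$.

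\textbf{First step.} The proof of \Cref{thm:sharp_n_step_partial_fra} shows that $L_m^\perp\cap\FW_2^{n+1}=\{\gamma q_1q_1^T\mid\gamma\geq0\}$. By \Cref{tab:polyhedral_cones}, $q_1q_1^T=(e_1+e_2)(e_1+e_2)^T$ is a DD generator. Hence $L_m^\perp\cap\DD=L_m^\perp\cap\FW_2^{n+1}$, and DD-partial FR selects a positive multiple of $q_1q_1^T$. The support-graph construction of \Cref{prop_sdd_disjoint_support_basis}, applied to the decomposition $W=\gamma q_1q_1^T$, is the same graph search in both cases. It therefore produces the same basis $V_1$, and the same reduced space $\widetilde L_1$.

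\textbf{Second step.} The theorem's proof shows that $\widetilde L_1^\perp\cap\FW_2^{2m}$ is the ray generated by $\widetilde e_1\widetilde e_1^T$. This matrix is nonnegative diagonal, so it lies in $\D\subseteq\DD$. By the same containment argument, the DD search set is this ray, the exposed face is identical, and the basis $V_2$ agrees.

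\textbf{Induction.} After these two steps, the reduced problem is $L_{m-1}\cap\mathbb S^{2(m-1)+1}_+$, exactly as established in the theorem. For the base case $m=1$, the first step is again the single ray of $q_1q_1^T$ and the second is the single ray of $\widetilde e_1\widetilde e_1^T$, so DD-partial FR takes two steps. Writing $T^{\DD}_m$ for the DD step count, this gives $T^{\DD}_m=2+T^{\DD}_{m-1}$ with $T^{\DD}_1=2$, hence $T^{\DD}_m=2m=n$.

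One also needs that DD-partial FR does not terminate early at any stage. This holds because at each stage the DD search set contains the nonzero generator identified above.

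\textbf{Upper bound.} As in the theorem, every step exposes a proper face, so the standard bound of $n$ steps for a nonzero feasible SDP over $\mathbb S^{n+1}_+$ shows that DD-partial FR cannot take more than $n$ steps. The count of $n$ is therefore attained.

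The main point needing care is representation dependence, as in \Cref{rem:partial_fra_representation}: the DD search at each stage must be performed with respect to the same basis matrix as the SDD search. This holds because both procedures use the graph-search basis of \Cref{prop_sdd_disjoint_support_basis}, applied to identical exposing matrices and identical decompositions. I expect this to be routine once it is stated explicitly.
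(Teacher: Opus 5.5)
Your proposal is correct and follows essentially the paper's argument. At every stage the SDD search set $L^\perp\cap\FW_2$ is a single ray generated by either $(e_i+e_j)(e_i+e_j)^T$ or a nonnegative diagonal matrix, both of which lie in $\DD\subseteq\FW_2$, so $L^\perp\cap\DD=L^\perp\cap\FW_2$ and DD-partial FR exposes the same sequence of faces; your explicit induction and your remark on using the same graph-search basis spell out bookkeeping that the paper's short proof leaves implicit.
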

\begin{proof}
At each stage of the proof of
\Cref{thm:sharp_n_step_partial_fra}, write $L$ for the affine subspace in the
current reduced formulation.  The cone $L^\perp\cap\FW_2$ is a ray generated
either by a matrix of the form $(e_i+e_j)(e_i+e_j)^T$ or by a nonnegative
diagonal matrix.  In both cases, the generator belongs to $\DD$.
Together with $\DD\subseteq\FW_2$, it follows that
\[
L^\perp\cap\DD=L^\perp\cap\FW_2
\]
at every stage.
Hence, DD-partial FR exposes the same sequence of faces and also requires
$2m=n$ steps.
\end{proof}

\begin{remark}[Dependence on the affine representation]
The relaxation \(L_m\) does not explicitly impose the lifted linear equations
\(q_j^TYe_0=0\) corresponding to \(q_j^Tx=0\).  These equations are redundant
over the PSD cone because
\(\langle q_jq_j^T,Y\rangle=0\) and \(Y\succeq0\) imply \(Yq_j=0\).
Their inclusion would nevertheless change the partial FR auxiliary system.
Indeed, with
\(H_j:=\frac12(e_0q_j^T+q_je_0^T)\), the arrow matrices satisfy
\[
H_j+\sum_{i\in\supp(q_j)}A_i
=\sum_{i\in\supp(q_j)}E_{ii}.
\]
Although \(H_j\) itself need not be positive semidefinite, it
is the data matrix of the added equality, and the combined matrix on the
right-hand side belongs to \(\D\subseteq\DD\subseteq\FW_2\).
Since the supports of \(q_1,\ldots,q_m\) cover all variable indices, summing
these identities gives
\[
W_{\mathrm{diag}}
:=\sum_{j=1}^m\left(
H_j+\sum_{i\in\supp(q_j)}A_i
\right)
=\sum_{i=1}^n\lambda_iE_{ii}
=
\begin{pmatrix}
0&0\\
0&\operatorname{Diag}(\lambda)
\end{pmatrix},
\qquad
\lambda=\mathbf{1}+\sum_{j=1}^{m-1}e_{2j}\in\mathbb R_{++}^n.
\]
Thus \(W_{\mathrm{diag}}\in\D\subseteq\DD\subseteq\FW_2\) and
\(\ker(W_{\mathrm{diag}})=\operatorname{span}\{e_0\}\), so it exposes the
minimal PSD face in one step.  Such dependence is intrinsic to facial
reduction: singularity degree is a property of the affine conic system, not of
its feasible set alone.  Accordingly, the \(n\)-step lower bound pertains to
the representation \(L_m\) defined above and does not persist after the
redundant lifted linear equations are added.
\end{remark}

\section[A support limitation of factor-width searches]
{A support limitation of factor-width searches}
\label{sec:factor_width_support_limitation}
The disjoint-support conclusion
of \Cref{prop_sdd_disjoint_support_basis} is specific to factor width
two.  Indeed, let \(k\geq3\) and let \(w\in\mathbb R^k\) have full support.
Then \(ww^T\in\FW_k^k\), and \(\ker(ww^T)\) has dimension \(k-1\).  Every
nonzero vector in this kernel has support of cardinality at least two, so a
basis cannot have pairwise disjoint supports because \(2(k-1)>k\).  A stronger
limitation is that a factor-width restriction can prevent partial FR from
detecting a valid affine equality involving all variables.

Consider
\[
P=\left\{x\in\{0,1\}^{n}\ \middle|\ \sum_{i=1}^{n}x_i=1\right\}.
\]
Let $L\cap\Snop$ be an SDP relaxation satisfying
\Cref{ass:binary_sdp_relaxation} and containing the lifted equality
\[
\langle uu^T,Y\rangle=0,
\qquad
u=-e_0+\sum_{i=1}^{n}e_i.
\]
Then
\[
uu^T\in L^\perp\cap\Snop,
\qquad\text{but}\qquad
L^\perp\cap\FW_k^{n+1}=\{0\}
\quad\text{for every }k\leq n.
\]
To prove the second assertion, fix
$W\in L^\perp\cap\FW_k^{n+1}$ with $k\leq n$.  By the definition of factor
width, write $W=\sum_{r=1}^s v^r(v^r)^T$, where
$|\supp(v^r)|\leq k$.
For each \(i=1,\ldots,n\), the matrix
\((e_0+e_i)(e_0+e_i)^T\) is the rank-one lift of a point in \(P\) and hence
belongs to \(L\).  Therefore
\[
0
=
\langle W,(e_0+e_i)(e_0+e_i)^T\rangle
=
\sum_{r=1}^s\bigl((v^r)_0+(v^r)_i\bigr)^2.
\]
Thus $(v^r)_i=-(v^r)_0$ for every $i$ and $r$, so each $v^r$ is a multiple of
$u=(-1,\mathbf{1})^T$.  Any nonzero multiple has support $n+1$, contradicting
$|\supp(v^r)|\leq k\leq n$.  Hence every $v^r$ is zero and therefore $W=0$.

Consequently, \(\FW_k\)-partial FR stops immediately for every \(k\leq n\),
although
$uu^T\in L^\perp\cap\mathbb S^{n+1}_+$ is a dense exposing matrix
available to full facial reduction.
The obstruction is the support size: every nonzero affine equality valid on
\(P\) is a nonzero multiple of \(\sum_i x_i=1\), whose homogenized coefficient
vector has support \(n+1\).

\paragraph{Comparison with affine facial reduction.}\hspace{0pt}
Affine facial reduction \cite{hu2023affine} instead constructs exposing
matrices from equations defining \(\aff(P)\), without imposing a factor-width
bound.
For the present set,
$\aff(P)=\{x\in\mathbb{R}^n\mid\sum_i x_i=1\}$, so affine facial reduction
constructs $uu^T$ directly and exposes
$F:=\{Y\succeq0\mid Yu=0\}$.  The average of the rank-one lifts of the points
in $P$ is
\[
\bar Y=\frac1n
\begin{bmatrix}
n & \mathbf{1}^T\\
\mathbf{1} & I_n
\end{bmatrix}.
\]
Because
\(L\) is affine and contains every rank-one lift, \(\bar Y\in L\).  Moreover,
\(\ker(\bar Y)=\operatorname{span}\{u\}\), and hence
\(\bar Y\in L\cap\ri(F)\).  Therefore \(F\) is the minimal face containing the
feasible set, and the resulting reduced formulation satisfies Slater's
condition.

\section{Conclusion}

For semidefinite relaxations of binary sets satisfying the
normalization and arrow constraints, the face identified by SDD-partial FR
admits a full-column-rank \(\{0,1\}\) basis matrix with pairwise disjoint
column supports.  Equivalently, this face records only variables fixed to zero
or one and groups of variables forced to be equal.  The basis yields a natural
reduced formulation in which fixed variables are eliminated, equality groups
are aggregated, and sparse data can remain sparse.  Diagonal-partial FR
deletes only indices corresponding to variables fixed to zero, while
the exposing matrices used by DD-partial FR have kernels admitting
disjoint-support bases with entries in \(\{0,\pm1\}\).

For a polyhedral inner approximation, the inequalities active throughout the associated LP
outer relaxation determine the next PSD face exactly.  In particular, for
diagonal- and DD-partial FR, the LP tests with optimal value zero identify the
active generator directions, whose common null space determines the face
exposed at that step.

The two constructions reveal complementary limitations of the
restricted search.  For the affine conic system \(L_m\cap\mathbb S^{n+1}_+\),
full facial reduction has singularity degree two, whereas DD- and SDD-partial
FR require \(n\) steps, the maximum
possible for a nonzero feasible SDP over \(\mathbb S^{n+1}_+\).
Thus tractable auxiliary problems at individual steps do not
guarantee a short reduction sequence.  In the sum-to-one example,
\(\FW_k\)-partial FR stops immediately for every \(k\leq n\): the homogenized
coefficient vector of every nonzero valid affine equation has support
\(n+1\), so the dense exposing matrix is unavailable to the restricted
search.  It is nevertheless available to full facial reduction and is
constructed directly by affine facial reduction.

Together, these results make explicit the tradeoff in partial
FR based on tractable inner approximations.  LP and SOCP auxiliary systems can
identify interpretable faces and yield sparse reduced formulations, but they
may require the maximum number of reduction steps or terminate before reaching
the minimal face.  The choice of inner approximation therefore governs not
only the cost of each auxiliary problem, but also the relations that can be
detected and the length of the resulting reduction sequence.

\bibliographystyle{siam}
\bibliography{mybib}

\begin{thebibliography}{10}

\bibitem{ahmadi2017optimization}
{\sc A.~A. Ahmadi and A.~Majumdar}, {\em Optimization over polynomials with
  sparsity-promoting constraints}, IEEE Transactions on Automatic Control, 62
  (2017), pp.~4967--4982.

\bibitem{ahmadi2019dsos}
\leavevmode\vrule height 2pt depth -1.6pt width 23pt, {\em {DSOS} and {SDSOS}
  optimization: More tractable alternatives to sum of squares and semidefinite
  optimization}, SIAM Journal on Applied Algebra and Geometry, 3 (2019),
  pp.~193--230.

\bibitem{blekherman2022sparse}
{\sc G.~Blekherman, H.~T. Ha, and T.~Riener}, {\em Sparse Polynomial
  Optimization}, Springer, 2022.

\bibitem{boman2005factor}
{\sc E.~G. Boman, D.~Chen, O.~Parekh, and S.~Toledo}, {\em On factor width and
  symmetric {H}-matrices}, Linear Algebra and its Applications, 405 (2005),
  pp.~239--248.

\bibitem{borwein1981regularizing}
{\sc J.~Borwein and H.~Wolkowicz}, {\em Regularizing the abstract convex
  program}, Journal of Mathematical Analysis and Applications, 83 (1981),
  pp.~495--530.

\bibitem{borwein1981facial}
{\sc J.~M. Borwein and H.~Wolkowicz}, {\em Facial reduction for a cone-convex
  programming problem}, Journal of the Australian Mathematical Society, 30
  (1981), pp.~369--380.

\bibitem{drusvyatskiy2017many}
{\sc D.~Drusvyatskiy and H.~Wolkowicz}, {\em The many faces of degeneracy in
  conic optimization}, Foundations and Trends{\textregistered} in Optimization,
  3 (2017), pp.~77--170.

\bibitem{goemans1995improved}
{\sc M.~X. Goemans and D.~P. Williamson}, {\em Improved approximation
  algorithms for maximum cut and satisfiability problems using semidefinite
  programming}, Journal of the ACM (JACM), 42 (1995), pp.~1115--1145.

\bibitem{goldman1956theory}
{\sc A.~J. Goldman and A.~W. Tucker}, {\em Theory of linear programming}, in
  Linear Inequalities and Related Systems, H.~W. Kuhn and A.~W. Tucker, eds.,
  vol.~38 of Annals of Mathematics Studies, Princeton University Press,
  Princeton, NJ, 1956, pp.~53--98.

\bibitem{hu2026shor}
{\sc H.~Hu}, {\em On the singularity degree of {Shor} relaxations for 0--1
  programs}.
\newblock Preprint, arXiv:2607.12476, 2026.

\bibitem{hu2025primal}
{\sc H.~Hu and M.~Xu}, {\em A primal approach to facial reduction for {SDP}
  relaxations of combinatorial optimization problems}.
\newblock Preprint,
  \url{https://optimization-online.org/wp-content/uploads/2025/07/primalFRA.pdf},
  2025.

\bibitem{hu2023affine}
{\sc H.~Hu and B.~Yang}, {\em Affine {FR} : an effective facial reduction
  algorithm for semidefinite relaxations of combinatorial problems}.
\newblock Preprint,
  \url{https://optimization-online.org/wp-content/uploads/2023/09/autoFR-1.pdf},
  2023.

\bibitem{kirschner2024predictor}
{\sc F.~Kirschner and E.~de~Klerk}, {\em A predictor-corrector algorithm for
  semidefinite programming that uses the factor width cone}, Vietnam Journal of
  Mathematics, 53 (2025), pp.~495--515.

\bibitem{laurent2003comparison}
{\sc M.~Laurent}, {\em A comparison of the {Sherali--Adams},
  {Lov{\'a}sz--Schrijver}, and {Lasserre} relaxations for 0--1 programming},
  Mathematics of Operations Research, 28 (2003), pp.~470--496.

\bibitem{lourenco2015solving}
{\sc B.~F. Louren{\c{c}}o, M.~Muramatsu, and T.~Tsuchiya}, {\em Solving {SDP}
  completely with an interior point oracle}, Optimization Methods and Software,
  36 (2021), pp.~425--471.

\bibitem{lovasz1991cones}
{\sc L.~Lov{\'a}sz and A.~Schrijver}, {\em Cones of matrices and set-functions
  and 0--1 optimization}, SIAM Journal on Optimization, 1 (1991), pp.~166--190.

\bibitem{permenter2018simplified}
{\sc F.~Permenter and P.~Parrilo}, {\em Partial facial reduction: simplified,
  equivalent {SDP}s via approximations of the {PSD} cone}, Mathematical
  Programming, 171 (2018), pp.~1--54.

\bibitem{roig2022globally}
{\sc B.~Roig-Solvas and M.~Sznaier}, {\em A globally convergent {LP} and
  {SOCP}-based algorithm for semidefinite programming}, arXiv preprint
  arXiv:2202.12374,  (2022).

\bibitem{sherali1990hierarchy}
{\sc H.~D. Sherali and W.~P. Adams}, {\em A hierarchy of relaxations between
  the continuous and convex hull relaxations for 0--1 programming}, SIAM
  Journal on Discrete Mathematics, 3 (1990), pp.~411--430.

\bibitem{sherali2013reformulation}
\leavevmode\vrule height 2pt depth -1.6pt width 23pt, {\em A
  Reformulation-Linearization Technique for Solving Discrete and Continuous
  Nonconvex Problems}, Springer Science \& Business Media, 2013.

\bibitem{shor1987quadratic}
{\sc N.~Z. Shor}, {\em Quadratic optimization problems}, Soviet Journal of
  Computer and Systems Sciences, 25 (1987), pp.~1--11.

\bibitem{sootla2019block}
{\sc A.~Sootla, Y.~Zheng, and A.~Papachristodoulou}, {\em Block
  factor-width-two matrices in semidefinite programming}, in 2019 18th European
  Control Conference (ECC), IEEE, 2019, pp.~1981--1986.

\bibitem{sremac2021error}
{\sc S.~Sremac, H.~J. Woerdeman, and H.~Wolkowicz}, {\em Error bounds and
  singularity degree in semidefinite programming}, SIAM Journal on
  Optimization, 31 (2021), pp.~812--836.

\bibitem{sturm2000error}
{\sc J.~F. Sturm}, {\em Error bounds for linear matrix inequalities}, SIAM
  Journal on Optimization, 10 (2000), pp.~1228--1248.

\bibitem{wang2021polyhedral}
{\sc Y.~Wang and S.~Burer}, {\em Polyhedral approximations in conic
  optimization}, Mathematical Programming, 188 (2021), pp.~477--504.

\end{thebibliography}
\addcontentsline{toc}{section}{Bibliography}

\end{document}